\newcommand{\M}{\mathcal{M}}
\newtheorem{theorem}{Theorem}[section]
\newtheorem{lemma}[theorem]{Lemma}
\theoremstyle{definition}
\newtheorem{definition}[theorem]{Definition}
\theoremstyle{remark}
\numberwithin{equation}{section}
\newcommand{\half}{\frac{1}{2}}
\newcommand{\Ra}{\mathbb{R}}
\newcommand{\Rn}{{\mathbb{R}}^{n}}
\newcommand{\Rnn}{{\mathbb{R}}^{2n}}
\newcommand{\ene}{{\mathbb{N}}}
\newcommand{\An}{\mathcal{A}_{k,\,\ell}}
\newcommand{\Ann}{\mathcal{A}_{1,\, 1}}
\begin{document}
\setcounter{page}{1}

\title[Anharmonic semigroups and applications]{Anharmonic semigroups and applications to  global well-posedness of nonlinear heat equations}

\author[D. Cardona]{Duv\'an Cardona}
\address{
  Duv\'an Cardona:
  \endgraf
  Department of Mathematics: Analysis, Logic and Discrete Mathematics
  \endgraf
  Ghent University, Belgium
  \endgraf
  {\it E-mail address} {\rm duvanc306@gmail.com, duvan.cardonasanchez@ugent.be}
  }

  \author[M. Chatzakou]{Marianna Chatzakou}
\address{
  Marianna Chatzakou:
  \endgraf
  Department of Mathematics: Analysis, Logic and Discrete Mathematics
  \endgraf
  Ghent University, Belgium
  \endgraf
  {\it E-mail address} {\rm Marianna.Chatzakou@UGent.be}
  }

  \author[J. Delgado]{Julio Delgado}
\address{
  Julio Delgado:
  \endgraf
  Departmento de Matematicas
  \endgraf
  Universidad del Valle
  \endgraf
  Cali-Colombia
  \endgraf
    {\it E-mail address} {\rm delgado.julio@correounivalle.edu.co}}

\author[V. Kumar]{Vishvesh Kumar}
\address{
  Visvesh Kumar:
  \endgraf
  Department of Mathematics: Analysis, Logic and Discrete Mathematics
  \endgraf
  Ghent University, Belgium
  \endgraf
  {\it E-mail address} {\rm  Vishvesh.Kumar@UGent.be}
  }

\author[M. Ruzhansky]{Michael Ruzhansky}
\address{
  Michael Ruzhansky:
  \endgraf
  Department of Mathematics: Analysis, Logic and Discrete Mathematics
  \endgraf
  Ghent University, Belgium
  \endgraf
 and
  \endgraf
  School of Mathematics
  \endgraf
  Queen Mary University of London
  \endgraf
  United Kingdom
  \endgraf
  {\it E-mail address} {\rm michael.ruzhansky@ugent.be}
  }

\subjclass[2020]{35L80, 47G30, 35L40,  35A27}

\keywords{Anharmonic oscillators, semigroups, modulation spaces,  nonlinear PDEs, global wellposedness Microlocal analysis}
\date{\today}
\thanks{The authors are supported by the FWO Odysseus 1 grant G.0H94.18N: Analysis and Partial Differential Equations and by the Methusalem programme of the Ghent University Special Research Fund (BOF) (Grant number 01M01021). The last two authors are supported by  FWO Senior Research Grant G011522N.   Julio Delgado is also supported by Vic. Investigaciones Universidad del Valle CI-71352. Michael Ruzhansky is also supported by EPSRC grants EP/R003025/2 and EP/V005529. Duv\'an Cardona is a postdoctoral fellow of the Research Foundation-Flanders
(FWO) under the posdoctoral
grant No 1204824N. Marianna Chatzakou is a postdoctoral fellow of the Research Foundation-Flanders
(FWO) under the postdoctoral grant No 12B1223N}

\begin{abstract} In this work we consider the  semigroup $e^{-t\An^{\gamma}}$ for $\gamma>0$ associated to an { \it anharmonic oscillator} of the form 
$ \An=(-\Delta)^{\ell}+|x|^{2k}$ where $k,\ell$ are integers $\geq 1$. By introducing a suitable H\"ormander metric on the phase-space we  analyse the semigroup $e^{-t\An^{\gamma}}$  within the framework of H\"ormander  $S(M,g)$ classes and obtain mapping properties in the scale of modulation spaces $\M^{p,q},\, 0<p,q\leq \infty,$ with respect to an anharmonic modulation weight. 
As an application, we apply the obtained bounds to establish the well-posedness for the nonlinear heat equation associated with $\An^{\gamma}$. It is worth noting that the results presented in this paper are novel, even in the case where $\gamma=1.$
\end{abstract} \maketitle

\allowdisplaybreaks
\section{Introduction}
Let $k,\ell$ be integers $\geq 1.$ In this paper we consider the operator $\An:=(-\Delta)^{\ell}+|x|^{2k}$ on $\Rn$ and the corresponding semigroup $e^{-t\An^{\gamma}}$ for $\gamma>0$. The operators  $\An$   are known as {\it anharmonic oscillators } and they play an important role in quantum physics, particularly when modelling vibrations for suitable families of molecules. Relevant  cases such as the quartic, quartic-quadratic and sextic potentials $V(x)$ associated to the one dimensional Hamiltonian $-\frac{d^2}{dx^2}+V(x)$ are of interest in the analysis of specific quantum systems of molecules. For instance in the analysis of water molecules, under some circumstances, the harmonic oscillator fails to be even an approximate description. Therein a successful model is the quartic-quadratic oscillator given by a potential $V(x)$ of the form $ax^4+bx^2$, where $a, b$ are suitable constants. This model is, in particular, useful for treating low-frequency vibrations in small ring molecules \cite{Zb1}. Indeed, there is a bulk of recent literature on the anharmonic oscillators of quartic and sextic order in the study of molecular vibrations (cf. \cite{Babenko1, Elkamash, Romat1} and the references therein). We point out that, we can also consider more general operators than $\An$ allowing lower order terms with respect to each one of the symbols $|\xi|^{2\ell}$ and $|x|^{2k}$. Such operators were considered in \cite{anh:cdr2}. Furthermore, more generally we consider  positive fractional powers $\An^{\gamma}$  of our anharmonic oscillators. We associate to the operator $\An$ a suitable  H\"ormander metric $g=g^{k, \ell}$ on the phase-space,   an appropriate modulation weight  $v$ and study the corresponding boundedness properties  on modulation spaces $\M^{p,q}$ for $0<p,q\leq \infty$. The fractional powers of the anharmonic oscillator are well defined for any $\gamma\in\mathbb{R}$, since in the scale of  $S(m,g)$ classes the operator $\An$ turns out to be elliptic with respect to the metric $g=g^{k, \ell}$ in the sense of \cite{bufa:hy}. The main results in this work  extend the main  ones in \cite{than:hs, than:hs1} for the special case of the harmonic oscillator $\mathcal{H}=-\Delta+|x|^2=\Ann $.\\

At this point the reader might wonder: since the natural operator in the context of quantum mechanics is the unitary group $e^{-it\mathcal{H}}$ when $\mathcal{H}=-\Delta+ V(x)$, why to study the semigroup $e^{-t\mathcal{H}}$? Indeed, the usefulness of heat semigroups in quantum mechanics has been shown as long as 40 years ago and was well pointed out by B. Simon in \cite{BSimon1}.  For instance, when looking at the $L^p$ properties of the eigenfunctions of $\mathcal{H}=-\Delta+ V(x)$, if $\mathcal{H}\psi=E\psi$ and $\psi\in L^2$, one observes that since $e^{-t\mathcal{H}}\psi=e^{-tE}\psi$, then $\psi\in e^{-t\mathcal{H}}(L^2)$. Thus, a mapping property such as $e^{-t\mathcal{H}}:L^2\rightarrow L^{\infty}$ can be used to deduce  that $\psi$ belongs also to $ L^{\infty}$.   
The works of B. Simon and R. Carmona  on Schr\"odinger semigroups have deeply influenced the research on these semigroups, their  applications are diverse and the literature is currently huge.
See for instance \cite{albev:hs, Carmona1, BSimon2, BSimon3, iou:a1s, ishik:a1},  for more accounts on this matter. Thus, the Schr\"odinger semigroups are relevant for the study of $L^p$ properties of eigenfunctions of Schr\"odinger operators and especially for the so-called {\it ground state}.  By using them several quantities associated to the ground state can be analized, and in particular, they are useful in the quantum field theory \cite{Jaffe1, Simon1x}.
On the other hand, spectral properties for anharmonic oscillators have been investigated by many researchers for a long time. The case of polynomial potentials of even order has been studied by Helffer and Robert in \cite{hr:sap3, hr:anosc2, hr:anosc}; the one-dimensional anharmonic oscillator with non smooth potential $\mathcal{L}=-\frac{d^{2\ell}}{dx^{2\ell}}+|x|$ was recently considered in \cite{sikora:a1}.   Herein we will consider mapping properties between modulation spaces $\M^{p,q}$ for the semigroup corresponding to $\An$.\\%

The modulation spaces were introduced by H. Feichtinger in 1983 (cf. \cite{feich:mod}) and have been intensively investigated in the last decades. We refer the reader to  the survey \cite{feich:hist} 
by Feichtinger for a historical account on the development of such spaces and a good account of the literature.   
Modulation spaces have found  numerous applications in various problems in linear
and nonlinear partial differential equations, see \cite{RSW12}
for a recent survey. See also \cite{cp:cp1} for some recent applications of modulation spaces to nonlinear PDEs.
Here, in particular, we will apply our estimates
on modulation spaces to obtain the global well-posedness for a class
 of nonlinear parabolic equations associated with $\An^{\gamma}$.\\
 
The investigation of the global well-posedness for linear and nonlinear parabolic equations is an area of intensive research. 
 The well-posedness for a class of parabolic equations within the setting of Weyl-H\"ormander calculus has been studied  in \cite{Delgado2018}. In \cite{CDDF12, MiaoL}, the authors discussed  estimates of the heat propagator $e^{-t (-\Delta)^\beta}$ with $\beta>0$ of fractional Laplacian $(-\Delta)^\beta$ for modulation spaces and Lebesgue spaces on $\mathbb{R}^n$ and studied the global well-posedness of the Cauchy problem for the nonlinear fractional heat equation. In this case, the heat propagator is a Fourier multiplier on $\mathbb{R}^n$, therefore, using the standard results on the  boundedness of Fourier multipliers one can deduce the desired estimates. In the present paper, we are dealing with the anharmonic semigroups $e^{-t\An^{\gamma}}$ for $\gamma>0$ which are not, in general, Fourier multipliers. Therefore, we mainly depend on the Weyl-H\"ormander $S(M, g)$ symbol calculus. Our proof strategy shares similarities with that of \cite{than:hs, than:hs1}, but our proofs are significantly more intricate. This complexity arises due to the generality of the considered fractional anharmonic oscillators $\An^{\gamma}$. Recently, there has been a great interest in investigating the control problem involving the anharmonic oscillators and their fractional counterparts. In particular, the null-controllability and spectral inequalities for the evolution equations associated with the (fractional) anharmonic oscillators  were widely studied in \cite{AlphS, Alph, Martin} and references therein. It is interesting to record that the null-controllability of evolution equations involving the harmonic oscillator is different from the null-controllability of evolution equations involving  anharmonic oscillators (see \cite{brian, Martin}).  In the works \cite{CK, CK23}, the authors investigated the boundedness of spectral and Fourier multipliers associated with anharmonic oscillators.   \\

This paper is organized as follows. In Section \ref{preliminaries} we briefly  present the necessary background on modulation spaces and the $S(m,g)$ calculus. In Section \ref{Modul} we establish our main result regarding the mapping properties on modulation spaces. We start first by introducing a suitable modulation weight function and deducting the main mapping properties we are addressing. 
Finally, in Section \ref{nonl1} we present an application to the global well-posedness for non-linear heat equations involving fractional anharmonic oscillators on modulation spaces.

\section{Preliminaries: Modulation spaces and Weyl-H\"ormander calculus}\label{preliminaries}
In this section we briefly review some basics on modulation spaces and and the Weyl-H\"ormander calculus. We refer the reader to  Gr\"ochenig's book \cite{groch:bK} for the basic definitions and properties of modulation spaces. We also refer to 
\cite[Section 18.5]{ho:apde2}, \cite{ho:apde2, le:book, b-l:qua, Robert1} for a more detailed discussion on the Weyl-H\"ormander calculus.\\ 

We start by recalling the notion of modulation spaces and we refer the reader to the Chapter 11 of \cite{groch:bK} for a detailed exposition. 
A {\it modulation weight function} is a non-negative, locally integrable function on $\Rnn$. Since we will also consider the notion of weight associated to a H\"ormander metric, here in order to distinguish from them we additionally labeled the term {\it modulation} for the ones corresponding to the setting of modulation spaces.
%
A modulation weight function $v$ on $\Rnn$ is called {\it submultiplicative, } if
\begin{equation}
 \label{polyw0} 
v(x+y)\leq v(x)v(y) \mbox{ for all }x,y\in\Rnn . 
\end{equation}

A modulation weight function $w$ on $\Rnn$ is  {\it $v$-moderate, } if
\begin{equation}
w(x+y)\leq v(x)w(y) \mbox{ for all }x,y\in\Rnn . 
\end{equation}
In particular the weights of polynomial type play an important role. They are of the form
\begin{equation}\label{polyw} 
v_s(x,\xi)=(1+|x|^2+|\xi|^2)^{s/2}.
\end{equation}
The $v_{s}$-moderated weights (for some $s$) are called {\it polynomially moderated}.\\

Given a modulation weight $w$ on $\Rnn$, $0\leq p,q\leq \infty,$ and a window $g\in\mathcal{S}(\Rn),$ the modulation space $\M_{w}^{p,q}(\Rn)$ consists of the temperate distributions $f\in\mathcal{S} '(\Rn)$ such that

\begin{equation}\label{EQ:modul}
\Vert f \Vert_{{\M}_{w}^{p,q}}:=  \Vert V_{g}f \Vert_{L_w^{p,q}}:=
\left(\int_{\Rn}\left(\int_{\Rn}|V_gf(x,\xi)|^pw(x,\xi)^pdx\right)^{\frac{q}{p}}d\xi\right)^{\frac 1q}<\infty ,\end{equation} with suitable modification when $p=\infty$ or $q=\infty,$
where 
\begin{equation}\label{stft1}
V_gf(x,\xi)=  \int_{\Rn} f(y)\,\overline{g(y-x)}e^{- iy\cdot \xi} dy,
\end{equation}%
denotes the short-time Fourier transform of $f$ with respect to $g$ at the point $(x,\xi)$.  The modulation space $\M_{w}^{p,q}(\Rn)$ endowed with the above quasi-norm becomes a quasi-Banach space, independent of $g\neq 0$. For $p,q \geq 1,$ the modulation space $\M_{w}^{p,q}(\Rn)$ is a  Banach space and the quasi-norm becomes a norm. In this case, the dual of $\M_{w}^{p,q}(\Rn)$ is $\M_{\frac{1}{w}}^{p',q'}(\Rn),$ where $p'$ and $q'$ are the Lebesgue conjugates of $p$ and $q,$ respectively. If $p_1\leq p_2,\,q_1, \leq q_2$ and $w_1 \geq w_2$ then the embedding $\M_{w_1}^{p_1,q_1}(\Rn) \hookrightarrow \M_{w_2}^{p_2,q_2}(\Rn)$ holds.\\

We now recall basic notions of the Weyl-H\"ormander calculus and refer the reader to \cite{ho:apde2, le:book, b-l:qua} for more detailed discussions.\\

The Weyl quantisation of $a\in \mathscr{S}'(\mathbb{R}^n\times \mathbb{R}^n)$ is the operator $a^w(x,D):\mathscr{S}(\mathbb{R}^n)\rightarrow \mathscr{S}'(\mathbb{R}^n)$ defined by  
\begin{eqnarray*}
    a^{w}(x,D)u(x):=\frac{1}{(2\pi)^{n}}\int\limits_{\mathbb{R}^n}\int\limits_{\mathbb{R}^n}e^{ i\langle x-y,\xi \rangle}a((x+y)/2,\xi)u(y)dyd\xi,\,\,u\in \mathscr{S}(\mathbb{R}^n).
\end{eqnarray*}
More generally, we will consider a family of  $t$-quantisations  of  $a(\cdot,\cdot)$ and for any  $t\in\Ra,$ defined by
\begin{equation*}
    a_t(x,D)f(x):=\frac{1}{(2\pi)^{n}} \int\limits_{\mathbb{R}^n}\int\limits_{\mathbb{R}^n}e^{ i\langle x-y,\xi \rangle}a(t x+(1-t)y,\xi)u(y)dyd\xi.
\end{equation*}

Thus, the Weyl quantisation of $a(\cdot,\cdot)$ corresponds to the case $t=\half$ i.e. $a_{\frac{1}{2}}(x,D)=a^{w}(x,D).$ The case $t=1$ corrresponds to the Kohn-Nirenberg quantisation.
\begin{equation*}
    a(x,D)u(x)=a_1(x,D)u(x)= \frac{1}{(2\pi)^{n}}\int\limits_{\mathbb{R}^n}e^{ i\langle x,\xi \rangle}a(x,\xi)\widehat{u}(\xi)d\xi.
\end{equation*}
We now recall the definition of a H\"ormander metric. By looking at our anharmonic oscillators,  this concept will be crucial in particular to deduce some good properties for the fractional powers of $\An$.
\begin{definition}[H\"ormander's metric]\label{HM}
For $X \in \mathbb{R}^{2n}$, let $g_{X}(\cdot)$ be a positive definite quadratic form on $\mathbb{R}^{2n}$. We say that 
$g(\cdot)$ is a H\"ormander metric if the following three conditions are satisfied:
\begin{enumerate}
\item[I.] {\bf Continuity or slowness.} There exist a constant $C>0$ such that 
$$ g_{X}(X-Y)\leq C^{-1}\Longrightarrow \left(\frac{g_X(T)}{g_Y(T)}\right)^{\pm 1}\leq C,$$
for all $T\in\mathbb{R}^{2n}\setminus\{0\}$.
\item[II.] {\bf Uncertainty principle.} In terms of the symplectic form $\sigma(Y,Z):=\langle z,  \eta\rangle -\langle y, \zeta\rangle$,
we define $$ g_{X}^{\sigma}(T):=\sup_{W\neq 0}
\frac{\sigma(T,W)^{2}}{g_{X}(W)}. $$ We say that $g$ satisfies the {\it uncertainty principle }
if $$ \lambda_{g}(X)=\inf_{T\neq 0}
\left(\frac{g_{X}^{\sigma}(T)}{g_{X}(T)}\right)^{1/2}\geq 1 ,$$ for
all $X, T\in\mathbb{R}^{2n}$.\\

 
\item[III.] {\bf Temperateness.} We say  that $g$ is temperate if there exist $\overline{C}>0$ and $J\in \mathbb{N}$ such that
$$ \left( \frac{g_{X}(T)}{g_{Y}(T)}\right)^{\pm1}\leq \overline{C}(1+g_{Y}^{\sigma}(X-Y))^{J},
$$ for all $X,Y,T\in \mathbb{R}^{2n}$.
\end{enumerate}
\end{definition}


Let $g$ be a H{\"o}rmander metric.  {\it {The Planck function}} associated to $g$ is defined by
  \begin{equation*}\label{hgn1}
       h_g(X)^2=\sup _{T\neq 0} \frac{g_{X}(T)}{g_{X}^{\sigma}(T) }.
  \end{equation*} 
Note that $h_g(X)=(\lambda_{g}(X))^{-1}$. So,

\begin{definition}[$g$-weight]\label{GW}  Let $M:\mathbb{R}^{2n}\rightarrow \mathbb{R}^{+}$ be a function. Then,
\begin{itemize}
\item we say that 
$M$ is $g$-{{\it continuous}} if there exists $\tilde{C}>0$
such that
$$ g_{X}(X-Y)\leq \frac{1}{\tilde{C}}\Longrightarrow\left( \frac{M(X)}{M(Y)}\right)^{\pm1}\leq \tilde{C}.$$
\item we say that
$M$ is  $g$-{\it temperate} if there exist $\tilde{C}>0$ and $N\in \mathbb{N}$
such that
$$ \left( \frac{M(X)}{M(Y)}\right)^{\pm1}\leq \tilde{C}(1+g_{Y}^{\sigma}(X-Y))^{N}. $$
\end{itemize}
We will say that  $M$ is a $g$-{\it weight}  if it is $g$-continuous and $g$-temperate.
\end{definition}
Given a H\"ormander metric $g$ and a $g$-weight, we can now define the corresponding classes of symbols.  
\begin{definition} For a H{\"o}rmander metric $g$ and a $g$-weight $M$,  the class $S(M,g)$ consists of all smooth functions $\sigma\in C^\infty(\mathbb{R}^{2n})$ such that for any integer $k$ there exists $C_{k}>0$, such that for all
$X,T_{1},...,T_{k}\in \mathbb{R}^{2n}$ we have 
\begin{equation}\label{inwhk}
    |\sigma^{(k)}(X;T_{1},...,T_{k})|\leq C_{k}M(X)\prod_{i=1}^{k} g_{X}^{1/2}(T_{i}).
\end{equation}
The notation $\sigma^{(k)}$ stands for the $k^{th}$ derivative of $\sigma$ and  $\sigma^{(k)}(X;T_{1},...,T_{k})$ denotes the $k^{th}$  derivative of $a$ at $X$ with respect to  the directions $T_{1},...,T_{k}$. 
For $\sigma\in S(M,g)$ we denote by $\parallel \sigma\parallel_{k,S(M,g)}$ the minimum $C_{k}$ satisfying the above inequality. The class $S(M,g)$ becomes a Fr\'echet space endowed with the family of seminorms $\parallel \cdot\parallel_{k,S(M,g)}$.
\end{definition}

\section{Mapping properties of anharmonic fractional heat semigroup  on modulation spaces}\label{Modul}
In this section, we establish the boundedness on modulation spaces for a class of pseudo-differential operators including the negative powers of our anharmonic oscillators. We first introduce the specific setting regarding those spaces by choosing a suitable modulation weight function, and secondly an appropriate class of symbols by defining a H\"ormander metric adapted to the operators $\An$. \\

We start by introducing a modulation weight function on $\Rn\times\Rn$ for integers $k, \ell\geq 1$. First we define $\tilde{v}$  by  \[\tilde{v}(x,\xi):=(1+|x|^{k}+|\xi|^{\ell}),\]  
for all $x,\xi\in\Rn$. \\

We note that for all $x,y,\xi,\eta\in\Rn$ and a suitable $C>1$ we have 
\begin{align*}%
    \tilde{v}(x,\xi)\tilde{v}(y,\eta)=&\,(1+|x|^{k}+|\xi|^{\ell})(1+|y|^{k}+|\eta|^{\ell})\nonumber\\
    \geq &\, 1+|x|^{k}+|\xi|^{\ell} + 1+|y|^{k}+|\eta|^{\ell}\nonumber\\
        \geq &\, 1+|x|^{k}+|y|^{k}+|\xi|^{\ell}+|\eta|^{\ell}\nonumber\\
\geq & \,C^{-1}( 1+|x+y|^{k}+|\xi+\eta|^{\ell})\nonumber\\
    \geq & \,C^{-1}\tilde{v}(x+y,\xi + \eta).%
\end{align*}%
Hence, by defining $v=C\tilde{v}$, we obtain the  submultiplicativity of $v$. From now on we will only consider modulation spaces $\M_{w}^{p,q}$ corresponding to  modulation weights of the form $w=v^m$ for $m\in\Ra$. When $m=0,$ we simply write $\M^{p,q}.$\\

On the other hand, we will associate to the operator $\An$ the following metric on the phase-space:
\begin{equation}g:= g^{(k,\ell)}=\frac{dx^2}{(1+|x|^{2k}+|\xi|^{2\ell})^{\frac{1}{k}}}+\frac{d\xi ^2}{(1+|x|^{2k}+|\xi|^{2\ell})^{\frac{1}{\ell}}}.\label{anhmet012}
\end{equation}
The fact that this metric is indeed a H\"ormander metric has been proved in \cite{anh:cdr}, where the authors established spectral properties for the operators $\An$, by studying Schatten-von Neumann properties of their negative powers.  The fact that $1+|x|^{2k}+|\xi|^{2\ell}$ is a $g$-weight follows,  and in particular we have 
\[|x|^{2k}+|\xi|^{2\ell}\in S(1+|x|^{2k}+|\xi|^{2\ell}, g).\]

We observe that the uncertainty parameter for our metric is given by \begin{equation}
\label{unce56}\lambda_g=(1+|x|^{2k}+|\xi|^{2\ell})^{\frac{k+\ell}{2k\ell}}.    
\end{equation}
Therefore the {\it Planck function } $g$ and the modulation weight $v$ are related by 
\begin{equation}
h_g^{-1}=\lambda_g\simeq v^{\frac{k+\ell}{k\ell}}.
\end{equation}%
We note that the exponent  ${\frac{k+\ell}{2k\ell}}$ in \eqref{unce56} 
also arises in the study of asymptotic formulas for the eigenvalue counting function of our anharmonic oscillators, as we will clarify later on.\\

The H\"ormander symbol classes $S((1+|x|^{2k}+|\xi|^{2\ell})^{\frac{m}{2}},g)$ for $m\in\Ra$, can also be defined in a more intrinsic way:\\

 If $a\in C^{\infty}(\Rn\times\Rn)$ we will say that $a\in\Sigma_{k,\ell}^m$ if 
 \begin{equation}
\label{sigmacl}|\partial_{x}^{\beta}\partial_{\xi}^{\alpha}a(x,\xi)|\leq C_{\alpha\beta}(1+|x|^{k}+|\xi|^{\ell})^{m-\frac{|\beta|}{k}-\frac{|\alpha|}{\ell}}\,, \end{equation} holds for all multi-indices $\alpha, \beta$ and for all $x, \xi \in \mathbb{R}^n.$ This characterisation of $S((1+|x|^{2k}+|\xi|^{2\ell})^{\frac{m}{2}},g)=\Sigma_{k,\ell}^m$ was derived in \cite{anh:cdr}. \\

The space of symbols $\Sigma_{k,\ell}^m=S((1+|x|^{2k}+|\xi|^{2\ell})^{\frac{m}{2}},g)$ becomes a Fr\'echet space  by introducing the following seminorms:
\begin{equation} \Vert a\Vert _{\alpha, \beta}:=\inf\{C_{\alpha\beta}: \eqref{sigmacl}\,\,\mbox{ holds}\}.
\end{equation}

It will also be useful to point out that our symbol classes can also be embedded into the ones introduced by Nicola and Rodino in \cite{NicolaRodino}, by taking $k_0=\max\{k,\ell\}$. Indeed, we have
\begin{equation}g=g^{(k,\ell)}\leq \frac{dx^2}{(1+|x|^{2k}+|\xi|^{2\ell})^{\frac{1}{k_0}}}+\frac{d\xi ^2}{(1+|x|^{2k}+|\xi|^{2\ell})^{\frac{1}{k_0}}} .\label{anhmet012hh}
\end{equation}
Hence, by taking $\Phi(x,\xi)=\Psi(x,\xi)=(1+|x|^{2k}+|\xi|^{2\ell})^{\frac{1}{k_0}},\, $ and $M(x,\xi)=1+|x|^{2k}+|\xi|^{2\ell}$, it is not difficult to see that  $\Phi$ and $\Psi$ are sub-linear and temperate weights. Consequently, $M$ is a temperate weight. Then, by considering the  class $S(M;\Phi, \Psi)$ in the sense of \cite{NicolaRodino}, it is clear that,  \begin{equation}\label{ineqclassa}S((1+|x|^{2k}+|\xi|^{2\ell})^{\frac{s}{2}},g)\subset S(M^{\frac{s}{2}};\Phi, \Psi).\end{equation}

It is crucial to point out that the densely defined anharmonic oscillator $\mathcal{A}_{k, \ell}$ on $L^{2}(\Rn)$  is invertible. Indeed, since the operators $(-\Delta)^{\ell}$ and  $|x|^{2k}$ are strictly positive, then the eigenvalues of $\mathcal{A}_{k, \ell}$ are strictly positive. On the other hand, by Theorem 4.2.9 of \cite{NicolaRodino}, it follows that the spectrum of the closure $\overline{\mathcal{A}}_{k, \ell}$ in $L^{2}(\Rn)$ only consists of a discrete sequence of eigenvalues diverging to $\infty$. Therefore, $0$ does not belong to the spectrum of  $\overline{\mathcal{A}}_{k, \ell}$. Moreover, the eigenvalues of $\overline{\mathcal{A}}_{k, \ell}$ 
 have all finite multiplicity and the eigenfunctions belong to $\mathcal{S}(\Rn),$ and $L^{2}(\Rn)$ has an orthonormal basis consisting of eigenfunctions of $\overline{\mathcal{A}}_{k, \ell}$. \\

We also note that our anharmonic oscillators are $g$-elliptic in the sense of \cite{bufa:hy} and we can define the real powers $\mathcal{A}_{k, \ell}^{\gamma}$ for all $\gamma\in\Ra$ of the operators $\mathcal{A}_{k, \ell}$. The strong uncertainty principle in that setting also holds.\\

The Sobolev space $H_{k, \ell}^s(\mathbb{R}^n)$ for $s \in \Ra$ and for  $k, \ell \in \mathbb{N}$ is defined as 
$$ H_{k, \ell}^s(\mathbb{R}^n):=\left\{ u \in \mathcal{S}'(\Rn): \Vert (\mathcal{A}_{k, \ell})^{\frac{s}{2}} u\Vert_{L^2(\Rn)}<\infty \right\}.$$

The space $H_{k, \ell}^s(\mathbb{R}^n)$ will be called the anharmonic-Sobolev space of order $s$ relative to $k, \ell.$
For $k=1$ and $\ell=1,$ $H_{k, \ell}^m(\mathbb{R}^n)$ is the well-known Shubin-Sobolev space (also known as Hermite-Sobolev space).
The space $H_{k, \ell}^s(\mathbb{R}^n)$ is a Hilbert space equipped with the sesquilinear form 
$$\langle u,v \rangle:= \langle ( \mathcal{A}_{k, \ell})^{\frac{s}{2}}u,( \mathcal{A}_{k, \ell})^{\frac{s}{2}}v \rangle_{L^2(\Rn)}.$$
Also, the dual space of $H_{k, \ell}^s(\mathbb{R}^n)$ is $H_{k, \ell}^{-s}(\mathbb{R}^n).$ \\%

The following lemma characterises the anharmonic-Sobolev space. The proof of this lemma is verbatim to the proof of \cite[Lemma 4.4.19]{ElenaRodino} by using the weight $v^m$ and 
the embedding and duality properties of $H_{k, \ell}^m(\mathbb{R}^n).$ Therefore, we skip the proof.
\begin{lemma} \label{char:ansobo} For all $m \in \Ra,$ we have 
\begin{equation}
    \M_{v^m}^{2,2}=H_{k, \ell}^m(\mathbb{R}^n)
\end{equation} with equivalent norms.
\end{lemma}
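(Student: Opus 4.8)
The plan is to reduce the identification $\M^{2,2}_{v^m} = H^m_{k,\ell}(\mathbb{R}^n)$ to the unweighted case $\M^{2,2} = L^2(\mathbb{R}^n)$ (which is the classical Plancherel-type identity for modulation spaces), combined with the mapping properties of the anharmonic oscillator powers $\mathcal{A}_{k,\ell}^{s/2}$ on the scale of modulation spaces. First I would record the two structural facts that make this work: on the one hand, $v^m$ is a polynomially moderated weight (this follows from the submultiplicativity of $v$ established just above, since $v(x,\xi) \simeq (1+|x|^k+|\xi|^\ell) \lesssim (1+|x|^2+|\xi|^2)^{\max(k,\ell)/2}$, so $v^m$ is dominated by a power of $v_s$ from \eqref{polyw}); on the other hand, the symbol of $\mathcal{A}_{k,\ell}$ lies in $\Sigma^2_{k,\ell} = S(1+|x|^{2k}+|\xi|^{2\ell}, g)$, hence by the $g$-ellipticity in the sense of \cite{bufa:hy} the power $\mathcal{A}_{k,\ell}^{s/2}$ is a Weyl operator with symbol in $S((1+|x|^{2k}+|\xi|^{2\ell})^{s/2}, g) = \Sigma^{s}_{k,\ell}$, and its parametrix $\mathcal{A}_{k,\ell}^{-s/2}$ has symbol in $\Sigma^{-s}_{k,\ell}$ up to smoothing terms.

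Next I would set up the weighted reduction. The key observation is that the anharmonic modulation weight satisfies $v^m \simeq (1+|x|^{2k}+|\xi|^{2\ell})^{m/2}$, which is exactly the $g$-weight attached to the symbol class $\Sigma^m_{k,\ell}$; equivalently, by \eqref{unce56} and the Planck-function relation $h_g^{-1} = \lambda_g \simeq v^{(k+\ell)/(k\ell)}$, the order-$m$ weight in the $S(M,g)$ scale matches $v^m$. Therefore $\mathcal{A}_{k,\ell}^{m/2}$ acts, via the Weyl–H\"ormander calculus for moderate weights, boundedly from $\M^{2,2}_{v^m}$ to $\M^{2,2}$ with bounded inverse $\mathcal{A}_{k,\ell}^{-m/2}: \M^{2,2} \to \M^{2,2}_{v^m}$ — this is the modulation-space analogue of the statement that an elliptic operator of order $m$ in $S(M,g)$ is an isomorphism between the corresponding weighted Sobolev-type spaces. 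Granting this, one has
\begin{equation*}
\| u \|_{\M^{2,2}_{v^m}} \simeq \| \mathcal{A}_{k,\ell}^{m/2} u \|_{\M^{2,2}} = \| \mathcal{A}_{k,\ell}^{m/2} u \|_{L^2(\Rn)} = \| u \|_{H^m_{k,\ell}(\Rn)},
\end{equation*}
using $\M^{2,2} = L^2$ in the middle equality and the definition of $H^m_{k,\ell}$ at the end. One then checks that the two spaces consist of the same tempered distributions: both are obtained by completing (or by restricting to those $u \in \mathcal{S}'$ with finite norm), and the norm equivalence transfers. For half-integer or real $m$ the only subtlety is interpreting $\mathcal{A}_{k,\ell}^{m/2}$ as the operator defined by functional calculus on the spectral decomposition recalled above (eigenfunctions in $\mathcal{S}(\Rn)$, discrete positive spectrum), which agrees with the Weyl operator with symbol in $\Sigma^m_{k,\ell}$.

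The main obstacle, and the reason the excerpt defers to \cite[Lemma 4.4.19]{ElenaRodino}, is establishing that a $g$-elliptic operator of order $m$ is genuinely bounded and boundedly invertible \emph{on the weighted modulation scale $\M^{2,2}_{v^m}$}, rather than only on the $L^2$-Sobolev scale $H^m_{k,\ell}$. This requires a Calder\'on–Vaillancourt / Sj\"ostrand-type boundedness theorem for operators with symbols in $S(M,g)$ acting between weighted modulation spaces whose weight is compatible with $M$ and $g$ — precisely that $\mathcal{A}_{k,\ell}^{m/2} \cdot v^{-m}$-conjugation yields an operator with symbol in $S(1,g)$, hence bounded on $\M^{2,2} = L^2$. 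Because this compatibility between the H\"ormander $g$-weight and the modulation weight $v^m$ has been baked in by our choice of $v$ (so that $v^m$ is exactly $M^{m/2}$ in the $S(M,g)$ calculus), the argument of \cite[Lemma 4.4.19]{ElenaRodino} goes through verbatim upon substituting the polynomial Shubin weight there by $v^m$ and invoking the embedding and duality properties of $H^m_{k,\ell}(\Rn)$ recorded above; for this reason we omit the routine verification.
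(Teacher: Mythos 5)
Your proposal is correct and follows essentially the same route as the paper: both defer the technical heart of the argument to \cite[Lemma~4.4.19]{ElenaRodino}, adapting its Shubin-weight version to the anharmonic weight $v^m$ and the scale $H^m_{k,\ell}$. The extra structure you supply — the reduction via the isomorphism $\mathcal{A}_{k,\ell}^{m/2}\colon \M^{2,2}_{v^m}\to\M^{2,2}=L^2$ together with the identification $v^m\simeq(1+|x|^{2k}+|\xi|^{2\ell})^{m/2}$ matching the $g$-weight of $\Sigma^m_{k,\ell}$ — is the skeleton of the argument the paper leaves implicit, so it clarifies rather than diverges from the paper's proof.
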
%
Using Lemma \ref{char:ansobo}, the inclusion relations given in Theorem 2.4.17 of \cite{ElenaRodino} and H\"older's inequality,  we deduce that for every $0<p,q \leq \infty$ and for $m \in \Ra$ large enough, we have%
\begin{equation} \label{embeddings}
    H_{k, \ell}^m(\mathbb{R}^n) \hookrightarrow \M^{p, q}(\Rn) \hookrightarrow \M^{\infty, \infty} \hookrightarrow H_{k, \ell}^{-m}(\mathbb{R}^n)
\end{equation}%
for all $k, l \geq 1.$ Recall that $\M^{p,q}(\mathbb{R}^n)$ is the modulation space $\M_{v^m}^{p,q}$ with $m=0.$\\%

We can now state the boundedness on modulation spaces for $t$-quantizations of symbols  in the setting of the above classes.%

\begin{theorem}\label{modb12} Let $m\in\Ra ,\, a\in\Sigma_{k,\ell}^{-m}$,  $0<p, q\leq\infty$ and  $t\in\Ra$. Then   
\[a_t(x,D):\M^{p, q}\rightarrow\M_{v^m}^{p,q},\] 
extends to a bounded operator, with operator norm depending only on a finite 
number of seminorms of $a$ in $\Sigma_{k,\ell}^{-m}$. 
\end{theorem}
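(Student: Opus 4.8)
The plan is to establish the bound in three stages, reducing everything to the Weyl calculus on the Hörmander metric $g=g^{(k,\ell)}$ and to the identification of modulation spaces with Sobolev-type spaces afforded by Lemma \ref{char:ansobo}. First I would treat the diagonal case $p=q=2$. Here the target space is $\M_{v^m}^{2,2}=H_{k,\ell}^m(\mathbb{R}^n)$, so the claim becomes the statement that $a_t(x,D)$ maps $L^2=\M^{2,2}$ into $H_{k,\ell}^m$, i.e. that $\mathcal{A}_{k,\ell}^{m/2}\,a_t(x,D)$ is $L^2$-bounded. Since $1+|x|^{2k}+|\xi|^{2\ell}$ is a $g$-weight and $\mathcal{A}_{k,\ell}$ is $g$-elliptic in the sense of \cite{bufa:hy}, its powers $\mathcal{A}_{k,\ell}^{m/2}$ have Weyl symbol in $S((1+|x|^{2k}+|\xi|^{2\ell})^{m/2},g)=\Sigma_{k,\ell}^m$; passing between the $t$-quantization of $a\in\Sigma_{k,\ell}^{-m}$ and its Weyl quantization preserves the symbol class (the $t$-to-Weyl conversion is an $S(1,g)$-continuous, in fact asymptotically-expandable, operation because the Planck function $h_g\le 1$). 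Then the composition formula for the Weyl-Hörmander calculus gives $\mathcal{A}_{k,\ell}^{m/2}\,a_t(x,D)\in\mathrm{Op}\,S((1+|x|^{2k}+|\xi|^{2\ell})^{0},g)=\mathrm{Op}\,S(1,g)$, and the Calderón-Vaillancourt theorem for $S(1,g)$ classes (valid for any Hörmander metric) yields $L^2$-boundedness with norm controlled by finitely many seminorms of $a$. This settles $p=q=2$.

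Second, I would obtain the two endpoint-type estimates $\M^{1,1}\to\M_{v^m}^{1,1}$ and $\M^{\infty,\infty}\to\M_{v^m}^{\infty,\infty}$ (and more generally the off-diagonal cases) by the Gabor/almost-diagonalization technique that is standard for pseudodifferential operators in modulation-space theory: for a symbol in a Hörmander class adapted to $v$, the matrix of $a_t(x,D)$ with respect to a Gabor frame is dominated by a convolution kernel that, after the weight $v^m$ is absorbed, lies in a weighted $\ell^1$ space uniformly in the frame index, because the off-diagonal decay of the Gabor matrix beats the polynomial growth $v^m$ thanks to the symbol estimates \eqref{sigmacl}. Schur's test on this $\ell^1$-dominated matrix then gives boundedness on all the sequence spaces $\ell^{p,q}_{v^m}$ simultaneously, hence on $\M^{p,q}\to\M_{v^m}^{p,q}$ for the full range $0<p,q\le\infty$; in the quasi-Banach range $p<1$ or $q<1$ one uses the $\ell^p$-triangle inequality in place of Schur but the $\ell^1$-domination of the kernel is more than enough. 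This is essentially the argument of \cite{than:hs, than:hs1} carried over to the metric $g^{(k,\ell)}$, and the embeddings \eqref{embeddings} together with Lemma \ref{char:ansobo} are what let one start from the already-proven $L^2$ case and run the frame argument cleanly.

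I expect the main obstacle to be the \emph{uniformity} in the Gabor almost-diagonalization: one must show that the constants in the off-diagonal decay estimates for the Gabor matrix of $a_t(x,D)$, and the $\ell^1$-norm of the dominating kernel after inserting $v^m$, depend only on finitely many of the seminorms $\Vert a\Vert_{\alpha,\beta}$ of \eqref{sigmacl} and not on the point in phase space — this is where the precise form of the anisotropic symbol estimates (the gains $\tfrac{|\beta|}{k}$ and $\tfrac{|\alpha|}{\ell}$ matched against the metric \eqref{anhmet012}) must be used carefully, since the metric degenerates in $x$ and $\xi$ at different rates. A convenient way around part of this is the embedding \eqref{ineqclassa} into the Nicola-Rodino classes $S(M;\Phi,\Psi)$, for which the requisite Gabor-frame mapping properties on weighted modulation spaces are already available; one then only needs to check that the weight $v^m$ is comparable to the appropriate power of $M=1+|x|^{2k}+|\xi|^{2\ell}$, which is immediate from $v\simeq 1+|x|^k+|\xi|^\ell$. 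Collecting the $p=q=2$ base case, the Schur/$\ell^p$ interpolation across the sequence-space scale, and the norm bookkeeping gives the theorem.
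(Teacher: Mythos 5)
Your proposal is workable but routes around the paper's argument rather than reproducing it. The paper's proof is shorter: it invokes Toft's continuity criterion (\cite{toft1:mod}, Theorem 3.1), which says that $a_t(x,D)\colon\M^{p,q}\to\M_{v^m}^{p,q}$ is bounded for all $0<p,q\le\infty$ provided the symbol $a$ lies in the (quasi-Banach) modulation space $\M^{\infty,r}_{\omega_0}(\Rnn)$ for $r\le\min\{1,p,q\}$, where $\omega_0(x,\xi,\eta,y)=(1+|x|^k+|\xi|^\ell+|y|^k+|\eta|^\ell)^m$; the hypothesis is then verified directly by an integration-by-parts estimate on $V_g a$, giving $|V_g a(Z,W)|\lesssim_N (1+|z|^k+|\gamma|^\ell)^{-m}(1+|w|^k+|\tau|^\ell)^{-2N}$ with $N$ arbitrary. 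Your Gabor almost-diagonalization in stage two is, morally, a re-derivation of precisely this criterion, and the STFT decay you would have to prove for the Gabor matrix is the same integration-by-parts computation; so the technical heart is shared, but the paper gets to cite it as a black box. Your stage one (the $L^2$ case via Weyl calculus and Calder\'on--Vaillancourt) is correct but redundant once stage two is in place, since the frame argument yields $p=q=2$ along with everything else; and the remark that the embeddings and Lemma \ref{char:ansobo} ``let one start from the already-proven $L^2$ case'' does not actually play a role in the frame machinery, which is independent of that base case. One small imprecision: in the quasi-Banach range $0<p,q<1$ the dominating kernel must lie in $\ell^r$ for $r=\min\{1,p,q\}$, not merely $\ell^1$ (for $r<1$, $\ell^r\subsetneq\ell^1$); this is exactly why the paper needs the exponent $N$ in the STFT bound to be arbitrary, and you should state that the off-diagonal decay is of unlimited polynomial order, not just ``beats $v^m$.'' With that adjustment your route closes, and what it buys over the paper's is self-containedness (no appeal to Toft's theorem) at the cost of reproving standard frame estimates and carrying an unnecessary $L^2$ step.
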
%
\begin{proof} We consider the following modulation weights  $\omega_1(x,\xi)=1$,  $\omega_2(x,\xi)=(1+|x|^{k}+|\xi|^{\ell})^m $  $(x,\xi\in\Rn)$ and $\omega_0(x,\xi,\eta,y)=(1+|x|^k+|\xi|^{\ell}+|y|^{k}+|\eta|^{\ell})^m ,$  $(x, \xi, y, \eta\in\Rn)$. An application of  Theorem 3.1 of \cite{toft1:mod} will give the boundedness of  
\[a_t(x,D):\M^{p,\, q}\rightarrow\M_{v^m}^{p,\,q},\]
for all $t\in\Ra$, provided $a\in \M_{\omega_0}^{\infty,\,  r}(\Rnn)$ for $r\leq\min\{1,p,q\}$.\\

In order to see that in fact $a\in \M_{\omega_0}^{\infty,\,  r}(\Rnn)$, for $W,Y\in\Rnn$, we first write: 
\begin{equation}\label{modexp1a}
e^{-iW\cdot Y}=(1+|w|^{2k}+|\tau|^{2\ell})^{-N}\left((\half +(-\Delta_{y})^k)^{N}+(\half +(-\Delta_{\eta})^{\ell})^{N}\right)e^{-iW\cdot Y},
\end{equation}%
where we have written $Y=(y,\eta), W=(w,\tau)$. \\%

We also note that for  a suitable $C>0$ we have
 \begin{equation}\label{in453}
 C^{-1}(1+|x|+|\xi|)^{\tau_0}\leq 1+|x|^{k}+|\xi|^{\ell}\leq C(1+|x|+|\xi|)^{\tau_1},
\end{equation}%
where $\tau_0=\min\{k, \ell\}$ and $\tau_1=\max\{k, \ell\}$. \\

We now plug  \eqref{modexp1a}  into the formula \eqref{stft1} for the short-time Fourier transform of the symbol $a$ on $\Rnn$. A repeated integration by parts and using the fact that $g\in \mathcal{S}(\Rnn) $ give us that the following holds: for every $N\in\ene$ there exists $C_N>0$ such that 
\begin{equation}\label{tt67}
                |V_ga(Z, W)|\leq C_N(1+|z|^{k}+|\gamma|^{\ell})^{-m}(1+|w|^{k}+|\tau|^{\ell})^{-2N},
\end{equation}
for all $Z, W\in\Rnn$ and where we have written $Z=(z,\gamma), W=(w,\tau)$.\\

From  \eqref{in453} and \eqref{tt67} we can conclude the proof by taking $N>\frac{n}{r\tau_0},$ which gives the finiteness of the desired modulation norm in $\M_{\omega_0}^{\infty,\,  r}(\Rnn)$ for the symbol $a$. 
\end{proof}

We now consider the special case of the positive powers of the anharmonic oscillator $\An$ and the corresponding semigroup. 
We will require some knowledge of the spectrum of the anharmonic oscillator $\An=(-\Delta)^{\ell}+|x|^{2k}$. We will denote by $\lambda_j$ the eigenvalues of $\An$ and arrange them in increasing order with the smallest eigenvalue denoted by $\lambda_0>0$. The estimates at the end of Section 5 in \cite{anh:cdr} give the asymptotics  for the eigenvalues $\lambda_j$:  
\begin{equation}\label{specA}
 \lambda_j\sim C_{k, \ell} j^{\frac{2k\ell}{n(k+\ell)}},\, \mbox{ as }\, j\rightarrow \infty.    
\end{equation}
We denote by $\Phi_j$ an orthonormal basis of eigenfunctions  corresponding to the eigenvalues  $\lambda_j$. We also denote by $d_j$ the dimension of the eigenspace $H_j$ corresponding to $\lambda_j$.   Thus, with respect to the orthogonal projections $P_j$ over each $H_j$ we  have
\[\An= \sum\limits_{j=0}^{\infty}\lambda_jP_jf, \hspace{ 0.4cm} P_jf=  \sum\limits_{i=1}^{d_j}\langle f,\Phi_{j_i}\rangle \Phi_{j_i} ,\]
where $\langle \cdot,\cdot\rangle $ denotes the inner product in  $L^2(\Rn)$.\\

We can define the fractional powers $\An^{\gamma}$ of $\An$ for any $\gamma\in\Ra$ by means of the spectral theorem
\[\An^{\gamma}f=\sum\limits_{j=0}^{\infty}\lambda_j^{\gamma}P_jf.\]
\\
Having this, we can say that the anharmonic-Sobolev space $H_{k, \ell}^s(\mathbb{R}^n)$ consists of all those functions $f \in \mathcal{S}'(\mathbb{R}^n)$ such that 
\begin{equation}
    \Vert f \Vert_{H_{k, \ell}^s(\mathbb{R}^n)}^2:= \Vert (\mathcal{A}_{k, \ell})^{\frac{s}{2}} u\Vert_{L^2(\Rn)}^2= \sum_{j=0}^\infty \lambda_j^s \Vert  P_jf \Vert_{L^2(\mathbb{R}^n)}^2 <\infty.
\end{equation}

The next result about the fractional anharmonic oscillator $\An^\gamma:=((-\Delta)^\ell+|x|^{2k})^\gamma$ will be essential in the proof of the main result of this section.
\begin{theorem}\label{regansymb}
Let $\gamma>0$ and let $k, \ell$ be integers $\geq 1.$ The fractional anharmonic oscillator $\An^\gamma:=((-\Delta)^\ell+|x|^{2k})^\gamma$ is a pseudo-differential operator with real Weyl symbol $a_{\gamma}:=  \An^\gamma(x, \xi) \in \Sigma^{2\gamma}_{k,\ell}.$ More precisely, the symbol $a_{\gamma} $ is given by
\begin{equation}\label{inet56}
    \An^\gamma(x, \xi):=(|\xi|^{2\ell}+|x|^{2k})^\gamma+r(x, \xi), \quad |\xi|^{\ell}+|x|^{k} \geq 1,
\end{equation} for some  
$r \in \Sigma^{2\gamma-(\frac{k+\ell}{k\ell})}_{k,\ell}.$
\end{theorem}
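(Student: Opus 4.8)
The plan is to establish that $\An^\gamma$ is a pseudodifferential operator in the Weyl–Hörmander calculus adapted to the metric $g=g^{(k,\ell)}$ by using the ellipticity of $\An$ together with the holomorphic functional calculus in $S(M,g)$-classes. First I would recall that, by the explicit computation of the Weyl symbol of $\An$ (which is exactly $|\xi|^{2\ell}+|x|^{2k}$ up to lower-order curvature corrections coming from the Weyl quantization of $(-\Delta)^\ell$, all of which lie in lower-order symbol classes), the operator $\An$ has Weyl symbol $a_1(x,\xi)=|\xi|^{2\ell}+|x|^{2k}+(\text{lower order})\in S(M,g)$ with $M=1+|x|^{2k}+|\xi|^{2\ell}$, and moreover $\An$ is $g$-elliptic: $a_1(x,\xi)\gtrsim M(x,\xi)$ for $|x|^k+|\xi|^\ell\ge 1$. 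This is precisely the framework of \cite{bufa:hy,anh:cdr}, where the real powers $\An^\gamma$ are already defined.

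Next, the core analytic step is the Seeley-type argument for complex powers. For $\gamma>0$ write $\An^\gamma=\An^{\lceil\gamma\rceil}\An^{\gamma-\lceil\gamma\rceil}$, so it suffices to treat the case $\gamma\in(-1,0)$ and use composition in $S(M,g)$ (which multiplies orders of symbols), since $\An^{\lceil\gamma\rceil}$ is a finite product of $\An$'s and visibly has Weyl symbol in $\Sigma^{2\lceil\gamma\rceil}_{k,\ell}$ with the stated leading term. For the resolvent, I would use that $(\An-z)^{-1}$, for $z$ in a suitable sector avoiding the positive real axis, is a pseudodifferential operator with symbol in $S(M^{-1}\langle z\rangle_M^{N},g)$ constructed by the standard parametrix iteration: the first approximation is $(a_1(x,\xi)-z)^{-1}$, and each correction term gains a factor $h_g=\lambda_g^{-1}=M^{-\frac{k+\ell}{2k\ell}}$. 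Then represent
\begin{equation*}
\An^{\gamma}=\frac{1}{2\pi i}\int_{\Gamma}z^{\gamma}(z-\An)^{-1}\,dz,
\end{equation*}
where $\Gamma$ is a contour encircling the spectrum (which lies in $[\lambda_0,\infty)$ with $\lambda_0>0$, so $0\notin\Gamma$), and deduce that $\An^\gamma$ has Weyl symbol in $S(M^{\gamma},g)=\Sigma^{2\gamma}_{k,\ell}$. Collecting the leading contribution $\frac{1}{2\pi i}\int_\Gamma z^\gamma (a_1-z)^{-1}\,dz = a_1^\gamma = (|\xi|^{2\ell}+|x|^{2k})^\gamma$ on the region $|\xi|^\ell+|x|^k\ge 1$ (modulo the lower-order part of $a_1$, which is absorbed), and observing that every remaining term in the parametrix expansion carries at least one factor $h_g=M^{-\frac{k+\ell}{2k\ell}}$ relative to the order, gives the remainder $r\in\Sigma^{2\gamma-\frac{k+\ell}{k\ell}}_{k,\ell}$ (one factor of $h_g^2=M^{-\frac{k+\ell}{k\ell}}$, as gains come in pairs in the Weyl calculus because the subprincipal symbol of a real operator is real). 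Reality of $a_\gamma$ follows since $\An^\gamma$ is self-adjoint and positive, so its Weyl symbol is real-valued.

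The main obstacle is bookkeeping the contour integral uniformly: one must control the resolvent symbol and all its derivatives in $S(M,g)$ with explicit, integrable-in-$z$ dependence on the spectral parameter, so that differentiation under the integral sign and the order count $M^\gamma$ are rigorously justified; this requires the quantitative estimates $|\partial_x^\beta\partial_\xi^\alpha(a_1-z)^{-1}|\lesssim |a_1-z|^{-1}M^{-\frac{|\beta|}{2k}-\frac{|\alpha|}{2\ell}}$ for $z$ off the spectrum, together with the Beals-type characterization of $S(M,g)$ to handle the parametrix remainder. A secondary technical point is verifying that the curvature corrections in the Weyl symbol of $(-\Delta)^\ell$ genuinely lie in $\Sigma^{2\ell-2}_{k,\ell}\subset\Sigma^{2\ell-\frac{k+\ell}{k\ell}}_{k,\ell}$ (true since $2\le\frac{k+\ell}{k\ell}$ fails in general — one must instead note these corrections are polynomial of strictly lower degree and check the $\Sigma$-order directly), so that they can be swept into $r$. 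Finally, one confirms the identification $S(M^{s/2},g)=\Sigma^s_{k,\ell}$ recorded earlier in the excerpt to phrase the conclusion in the stated notation, and restricts the leading-term identity to $|\xi|^\ell+|x|^k\ge 1$ where $M\simeq |\xi|^{2\ell}+|x|^{2k}$ and the cutoff issues at the origin disappear.
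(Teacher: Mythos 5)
Your overall strategy (Seeley-type complex powers via the resolvent parametrix and Cauchy contour integral, with composition in the Weyl--H\"ormander calculus to reduce to a single power in $(-1,0)$) is correct and would prove the theorem. However, it is worth knowing that the paper does not carry this construction out. Its proof is much shorter: after (i) observing from \eqref{unce56} that the Planck function satisfies the strong uncertainty principle, and (ii) noting that, since $g$ is split and $\sigma=|\xi|^{2\ell}+|x|^{2k}$ is polynomial, all $t$-quantizations of $\sigma$ agree modulo lower-order symbols (by Theorem 2.3.12 of Lerner), the paper simply applies Theorem 4.3.6 of Nicola--Rodino, which is precisely the black-box theorem on real powers of positive $g$-elliptic pseudodifferential operators proved by the parametrix-and-contour argument you outline. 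So what you have written is, in effect, a reproof of the cited lemma rather than a shortcut; both routes land in the same place, but the paper's route is just a citation check (verify the hypotheses of the Nicola--Rodino theorem), whereas yours internalizes its proof.

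One concrete slip you should fix: the remainder class $\Sigma^{2\gamma-\frac{k+\ell}{k\ell}}_{k,\ell}$ corresponds to a gain of exactly one factor of the Planck function over $\Sigma^{2\gamma}_{k,\ell}$, not two. Recall that $\Sigma^m_{k,\ell}=S(M^{m/2},g)$ with $M=1+|x|^{2k}+|\xi|^{2\ell}$, so the $\Sigma$-scale is built on the base weight $(1+|x|^k+|\xi|^\ell)\simeq M^{1/2}$. Since
\[
h_g \;=\; M^{-\frac{k+\ell}{2k\ell}} \;\simeq\; (1+|x|^k+|\xi|^\ell)^{-\frac{k+\ell}{k\ell}},
\]
a single factor of $h_g$ drops the $\Sigma$-exponent by $\frac{k+\ell}{k\ell}$, which is exactly the drop from $\Sigma^{2\gamma}$ to $\Sigma^{2\gamma-\frac{k+\ell}{k\ell}}$. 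Your parenthetical ``one factor of $h_g^2=M^{-\frac{k+\ell}{k\ell}}$, as gains come in pairs in the Weyl calculus'' conflates the $\Sigma$-exponent with the $M$-exponent: $h_g^2$ would drop the $\Sigma$-index by $\frac{2(k+\ell)}{k\ell}$, claiming a strictly better remainder than the theorem (or the paper) asserts. The reality/parity structure of the Weyl calculus might in fact deliver that extra gain, but it is not needed here, and as written the two sentences contradict each other; the first (``at least one factor of $h_g$'') is all you need and is the one consistent with the stated conclusion.
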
%
\begin{proof} We first note that from \eqref{unce56},  the Planck function corresponding to our metric $g$ satisfies  \[h_g=(1+|x|^{2k}+|\xi|^{2\ell})^{-\frac{k+\ell}{2k\ell}}\leq C(1+|x|^{k}+|\xi|^{\ell})^{-\frac{k+\ell}{k\ell}}\leq C(1+|x|+|\xi|)^{-\frac{(k+\ell)\min\{k,\ell\}}{k\ell}},\] 
and the strong uncertainty principle holds for $g$ in the sense of \cite{NicolaRodino}.\\

On the other hand, since the H\"ormander metric $g$ is split and due to the form of the symbol $\sigma:=|\xi|^{2\ell}+|x|^{2k}$, it is not difficult to see that their corresponding $\sigma_t(x,D)$ quantizations coincide up to lower order terms by Theorem 2.3.12 of \cite{le:book}. The operator $\An$ is positive and Theorem 4.3.6 of \cite{NicolaRodino} applied to the class $S((1+|x|^{2k}+|\xi|^{2\ell})^{\frac{\gamma}{2}},g)$ regarding the positive powers of $\An^{\gamma}$, gives the desired result. Indeed, the Weyl symbol $a_{\gamma}=\An^{\gamma}(x,\xi)$ satisfies the following estimate for the first remainder \[\An^{\gamma}(x,\xi)-(|\xi|^{2\ell}+|x|^{2k})^\gamma\in \Sigma^{2\gamma-(\frac{k+\ell}{k\ell})}_{k,\ell}, \]
for $|\xi|^{\ell}+|x|^{k} \geq 1$. \end{proof}
As we have seen, the operator $A=\An^{\gamma}$ for $\gamma>0$, is a pseudo-differential operator with positive, elliptic Weyl symbol in $ S(M^{2\gamma};\Phi, \Psi)$ as in \eqref{ineqclassa} and its spectrum consists of a sequence of eigenvalues diverging to $\infty$. The corresponding eigenfunctions $\Phi_j$ define an orthonormal basis of $L^2(\Rn)$. We can then define, for $t\geq 0$ the {{\it   anharmonic  semigroup}} associated to $\An^{\gamma}$ for $\gamma>0$, by
\begin{equation}
e^{-t\An^{\gamma}}f=\sum\limits_{j=0}^{\infty}e^{-t\lambda_j^{\gamma}}P_jf.\end{equation}
Moreover, thanks to Theorem 4.5.1 of \cite{NicolaRodino}, the operator $e^{-t\An^{\gamma}}$ is a pseudo-differential operator with the Weyl symbol satisfying a family of inequalities that will be useful in the next theorem.\\

We can now state  the main result of this section.
\begin{theorem} \label{mainthmest}   Let $\gamma>0,$ $0<p_1,p_2,q_1,q_2 \leq \infty$ and let $k, \ell$ be integers $\geq 1.$ Define $\tilde{p}, \tilde{q}$ and $\sigma$ as follows:
$$\frac{1}{\tilde{p}}:=\text{max} \left\{ \frac{1}{p_2}-\frac{1}{p_1}, 0 \right\},\quad \frac{1}{\tilde{q}}:=\text{max} \left\{ \frac{1}{q_2}-\frac{1}{q_1}, 0 \right\}\,\,\text{ and }\,\, \sigma:= \frac{n}{2 \gamma} \Big(\frac{1}{k\tilde{p}}+\frac{1}{\ell\tilde{q}} \Big). $$

Then the  semigroup $e^{-t\An^{\gamma}}$ associated to the 
fractional anharmonic oscillator $\An^\gamma:=((-\Delta)^\ell+|x|^{2k})^\gamma$ on $\mathbb{R}^n$, satisfies the following estimate, for every $t>0,$
\begin{equation}\label{mappinganharmonic}
    \Vert e^{-t\An^{\gamma}} f\Vert_{{\M}^{p_2,q_2}(\mathbb{R}^n)} \leq C(t) \Vert  f\Vert_{{\M}^{p_1,q_1}(\mathbb{R}^n)},
\end{equation}
where
\begin{equation} \label{heatest}
    C(t)= C' \begin{cases} t^{-\sigma}\quad & 0<t \leq 1, \\ e^{-t \lambda_0^\gamma} \quad & t \geq 1,
    \end{cases} 
\end{equation}
for some positive constant $C',$  where $\lambda_0$ is the smallest eigenvalue of the anharmonic oscillator $\An.$
\end{theorem}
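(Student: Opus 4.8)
The plan is to reduce the estimate \eqref{mappinganharmonic} to a combination of three ingredients: (i) the description of $e^{-t\An^{\gamma}}$ as a pseudo-differential operator whose Weyl symbol lies in a controlled $S(M,g)$-class with explicit $t$-dependence of the seminorms; (ii) the modulation-space boundedness of operators with symbols in $\Sigma_{k,\ell}^{-m}$ from Theorem \ref{modb12}, applied to $\An^{\gamma\delta}e^{-t\An^{\gamma}}$ for a suitable power $\delta\geq 0$; and (iii) the embeddings \eqref{embeddings} between modulation spaces and anharmonic-Sobolev spaces $H_{k,\ell}^s=\M_{v^s}^{2,2}$ together with an interpolation/commutation argument that converts a gain of $\An^{\gamma\delta}$-smoothing into a gain in the modulation indices $(p,q)$. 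Concretely, I would first handle the diagonal case $p_1=p_2$, $q_1=q_2$ (so $\tilde p=\tilde q=\infty$, $\sigma=0$): here one only needs uniform-in-$t$ boundedness of $e^{-t\An^{\gamma}}$ on $\M^{p,q}$ for $0<t\leq 1$, which follows from Theorem \ref{regansymb}/Theorem 4.5.1 of \cite{NicolaRodino} giving a symbol in $\Sigma_{k,\ell}^{0}$ with seminorms bounded uniformly for $t\in(0,1]$, and then Theorem \ref{modb12} with $m=0$.

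Next I would treat the genuinely smoothing case. The key point is the elementary scalar inequality: for $\delta\geq 0$ there is $C_\delta>0$ with
\begin{equation}\label{scalarbd}
\lambda^{\gamma\delta}e^{-t\lambda^{\gamma}}\leq C_\delta\, t^{-\delta},\qquad \lambda>0,\ 0<t\leq 1,
\end{equation}
which shows that $\An^{\gamma\delta}e^{-t\An^{\gamma}}$ has, at the spectral level, norm $O(t^{-\delta})$; lifting this to the symbol level via Theorem 4.5.1 of \cite{NicolaRodino} (which provides, for $\An^{\gamma\delta}e^{-t\An^{\gamma}}$, a Weyl symbol in $S((1+|x|^{2k}+|\xi|^{2\ell})^{\gamma\delta}h_g^{\text{?}},g)$ with seminorms $O(t^{-\delta})$ for $t\in(0,1]$) gives that $\An^{\gamma\delta}e^{-t\An^{\gamma}}$ has symbol in $\Sigma_{k,\ell}^{-m}$ for any prescribed $m$ provided $\delta$ is chosen large, with $\Sigma$-seminorms $O(t^{-\delta})$. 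Then Theorem \ref{modb12} yields, for $0<t\leq 1$,
\begin{equation}\label{smoothbd}
\|e^{-t\An^{\gamma}}f\|_{\M_{v^{m}}^{p,q}}=\|\An^{-\gamma\delta}\cdot\An^{\gamma\delta}e^{-t\An^{\gamma}}f\|_{\M_{v^m}^{p,q}}\lesssim t^{-\delta}\|f\|_{\M^{p,q}},
\end{equation}
after absorbing the negative power $\An^{-\gamma\delta}$ into the weight (it maps $\M_{v^m}^{p,q}\to\M_{v^{m+2\gamma\delta\cdot(\text{scaling})}}^{p,q}$, again by Theorem \ref{modb12}), choosing $m$ to match the target. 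Combining the diagonal estimate, the weighted-gain estimate \eqref{smoothbd}, and the Sobolev embeddings \eqref{embeddings} — which convert a weight $v^{m}$ with $m$ proportional to $\frac{n}{2\gamma}\big(\tfrac1{k\tilde p}+\tfrac1{\ell\tilde q}\big)$ into the index change from $(p_1,q_1)$ to $(p_2,q_2)$, using that $v^{s}\simeq\lambda_g^{s k\ell/(k+\ell)}$ and that the critical embedding threshold $H_{k,\ell}^{s}\hookrightarrow\M^{p,q}$ costs $s\simeq\frac{n}{?}\cdot(\cdots)$ — and finally optimizing the split exponent $\delta$ against these losses, produces exactly the exponent $\sigma=\frac{n}{2\gamma}\big(\frac{1}{k\tilde p}+\frac{1}{\ell\tilde q}\big)$ in \eqref{heatest} for $0<t\leq1$.

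For the range $t\geq 1$ one uses the spectral decomposition $e^{-t\An^{\gamma}}f=\sum_j e^{-t\lambda_j^{\gamma}}P_jf=e^{-(t-1)\lambda_0^{\gamma}}\sum_j e^{-(t-1)(\lambda_j^{\gamma}-\lambda_0^{\gamma})}e^{-\lambda_j^{\gamma}}P_jf$; writing $e^{-t\An^{\gamma}}=e^{-(t-1)\lambda_0^{\gamma}}\,R_{t-1}\,e^{-\An^{\gamma}}$ where $R_s=\sum_j e^{-s(\lambda_j^{\gamma}-\lambda_0^{\gamma})}P_j$ is a contraction on every $\M_{v^m}^{p,q}$ uniformly in $s\geq0$ (its symbol, by Theorem 4.5.1 of \cite{NicolaRodino}, lies in $\Sigma_{k,\ell}^{0}$ with seminorms bounded uniformly in $s$, since $\lambda_j^{\gamma}-\lambda_0^{\gamma}\geq0$), and $e^{-\An^{\gamma}}:\M^{p_1,q_1}\to\M^{p_2,q_2}$ is bounded by the $t=1$ case already proved, one gets the factor $e^{-(t-1)\lambda_0^{\gamma}}\simeq e^{-t\lambda_0^{\gamma}}$, which is \eqref{heatest} for $t\geq1$. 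The main obstacle I anticipate is step (ii)–(iii): making precise, with the correct bookkeeping of exponents, how Theorem 4.5.1 of \cite{NicolaRodino} controls the $t$-dependence of the $S(M,g)$-seminorms of the symbol of $\An^{\gamma\delta}e^{-t\An^{\gamma}}$ (the naive bound \eqref{scalarbd} must be upgraded to all derivatives, which is where the $g$-calculus and the strong uncertainty principle genuinely enter), and then checking that the embedding losses in \eqref{embeddings} are sharp enough that optimizing $\delta$ returns precisely the stated $\sigma$ rather than something larger; the interplay of the two anisotropic scales $|x|^k$ and $|\xi|^\ell$ is exactly what forces the weight $\tfrac{1}{k\tilde p}+\tfrac1{\ell\tilde q}$ combination.
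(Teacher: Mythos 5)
Your plan for the regime $0<t\leq 1$ shares the two structural ingredients of the paper's proof (the uniform $t$-dependent $\Sigma_{k,\ell}$-bounds for the Weyl symbol of $e^{-t\An^\gamma}$ from Theorem~4.5.1 of \cite{NicolaRodino}, and the mapping Theorem~\ref{modb12}), but the way you convert the smoothing gain into the $(p,q)$-index change has a genuine gap, one you yourself flag at the end. Your scheme couples the auxiliary power $\delta$ to the weight exponent $m$: you obtain $\|e^{-t\An^\gamma}f\|_{\M_{v^m}^{p,q}}\lesssim t^{-\delta}\|f\|_{\M^{p,q}}$ with $m\sim 2\gamma\delta$, and then you need the embedding $\M_{v^m}^{p_1,q_1}\hookrightarrow\M^{p_2,q_2}$. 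That embedding (proved by H\"older, $\|F\|_{L^{\tilde p,\tilde q}}$ against $v^{-m}$) requires $v^{-m}\in L^{\tilde p,\tilde q}(\Rnn)$, i.e. the \emph{strict} inequality $m>n\big(\tfrac{1}{k\tilde p}+\tfrac{1}{\ell\tilde q}\big)=2\gamma\sigma$, hence $\delta>\sigma$. So your route yields $t^{-\delta}$ for every $\delta>\sigma$, but never the endpoint $t^{-\sigma}$ claimed in \eqref{heatest}. The paper decouples the two parameters by working with the $t$-dependent weight $1+t^N\omega^N$ (where $\omega=v^{2\gamma}$) at the level of the STFT: it first proves the weighted bound $\Vert(1+t^N\omega^N)V_g(e^{-t\An^\gamma}f)\Vert_{L^{p_1,q_1}}\lesssim\Vert f\Vert_{\M^{p_1,q_1}}$ uniformly in $t\in(0,1]$, and then computes $\Vert(1+t^N\omega^N)^{-1}\Vert_{L^{\tilde p,\tilde q}}$ explicitly via the anisotropic dilations $\tilde x=t^{1/(2\gamma k)}x$, $\tilde\xi=t^{1/(2\gamma\ell)}\xi$. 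That change of variables produces the prefactor $t^{-n/(2\gamma k\tilde p)}\,t^{-n/(2\gamma\ell\tilde q)}=t^{-\sigma}$ exactly, while the role of $N$ is only to make the residual $t$-independent integral converge. Thus the $t$-exponent is fixed at $\sigma$ irrespective of the size of $N$. You should replace the Sobolev/weighted-embedding step by this direct H\"older-with-anisotropic-scaling computation.

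For $t\geq 1$ your factorization $e^{-t\An^\gamma}=e^{-(t-1)\lambda_0^\gamma}R_{t-1}e^{-\An^\gamma}$ with $R_s=\sum_j e^{-s(\lambda_j^\gamma-\lambda_0^\gamma)}P_j$ is a genuinely different route from the paper, and it is plausible, but the claim that $R_s$ has $\Sigma_{k,\ell}^0$-seminorms bounded uniformly over \emph{all} $s\geq 0$ is not a direct consequence of Theorem 4.5.1 of \cite{NicolaRodino}, which controls the parametrix on compact time intervals; as $s\to\infty$, $R_s\to P_0$ (a finite-rank, infinitely smoothing projector), so some additional argument is needed to get uniform $\M^{p,q}$-bounds. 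The paper instead estimates termwise: it shows $\Vert P_j\Vert_{\M^{p_1,q_1}\to\M^{p_2,q_2}}\leq C'\lambda_j^m$ for $m$ large using the embeddings \eqref{embeddings} and the anharmonic Sobolev characterization of Lemma~\ref{char:ansobo}, and then sums $\sum_j e^{-t\lambda_j^\gamma}\lambda_j^m\lesssim e^{-t\lambda_0^\gamma}$ via the Weyl asymptotics \eqref{specA}. That route bypasses any uniform-in-$s$ symbol bound and is more elementary; you may wish to adopt it.
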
 
\begin{proof} We divide the proof into two steps, in the first step we consider $0<t\leq 1$,  and after that the case  $t\geq 1.$ \\ %

\noindent {\bf Case $0<t \leq 1$}. The modulation spaces obey some inclusion relations that allow us to reduce the proof to consider $p_2$ replaced by $\min\{p_1, p_2\}$ and $q_2$ replaced by $\min\{q_1, q_2\}$. Thus, we will consider $p_2\leq p_1$ and $q_2\leq q_1$. \\%

On the other hand, since the operator $A=\An^{\gamma}$ for $\gamma>0$, is a pseudo-differential operator with positive, elliptic Weyl symbol in $ S(M^{2\gamma};\Phi, \Psi)$, its $g$-ellipticity following from  \eqref{inet56}. We can then  apply 
Theorem 4.5.1 of \cite{NicolaRodino} to the heat kernel 
$e^{-t\An^{\gamma}}$. The operator $e^{-t\An^{\gamma}}$ has a Weyl symbol $u(t,x,\xi)$ and we write $u_t(x,\xi)=u(t,x,\xi)$. For every $N\geq 0$, the symbol $t^Nu_t$ belongs to a bounded subset of $\Sigma_{k,\ell}^{-2\gamma N}$ provided that $t$ varies on a compact subset of $[0,+\infty)$.\\%

If $N\in\ene$ is such that $2\gamma N>2n$, applying Theorem \ref{modb12} to the symbols $u_t$ and $t^Nu_t$, with $\omega(x,\xi)=v_{2\gamma}(x,\xi)=(1+|x|^{k}+|\xi|^{\ell})^{2\gamma}$, we obtain
\[\Vert(1+t^N\omega^N)V_g(e^{-t\An^{\gamma}}f)\Vert_{L^{p_1,q_1}}\leq C \Vert f \Vert_{\M^{p_1, q_1}},\]
for a suitable constant $C$ independent of $t\in (0,1]$.\\%

Thus, in order to obtain \eqref{mappinganharmonic}, it will be enough to prove that 
 \begin{equation}\label{aux456}     
 \Vert F \Vert_{L^{p_2, q_2}}\leq C(t)\Vert (1+t^N\omega^N)F \Vert_{L^{p_1, q_1}},\end{equation}
with $C(t)$ as in  the statement and for all measurable functions $F$.\\%

By H\"older inequality and since $\frac{1}{p_2}=\frac{1}{p_1}+\frac{1}{\tilde{p}}$, $\frac{1}{q_2}=\frac{1}{q_1}+\frac{1}{\tilde{q}}$, we have
\begin{align*}\Vert F \Vert_{L^{p_2, q_2}}=& \Vert (1+t^N\omega^N)^{-1}(1+t^N\omega^N)F \Vert_{L^{p_2, q_2}}\\
\leq & \Vert (1+t^N\omega^N)^{-1}\Vert_{L^{\tilde{p}, \tilde{q}}}\Vert(1+t^N\omega^N)F \Vert_{L^{p_1, q_1}}.
\end{align*}

Now, to estimate $\tilde{C}(t)=\Vert (1+t^N\omega^N)^{-1}\Vert_{L^{\tilde{p}, \tilde{q}}}$, we proceed as follows:
\begin{align*}
    \tilde{C}(t)^{\tilde{q}}&=\Vert (1+t^N\omega^N)^{-1}\Vert_{L^{\tilde{p}, \tilde{q}}}^{\tilde{q}}=\smallint_{\mathbb{R}^n}\left(\smallint_{\mathbb{R}^n}(1+t^{N}(1+|x|^k+|\xi|^\ell)^{2\gamma N})^{-\tilde{p} }dx \right)^{\frac{\tilde{q}}{\tilde{p}} }d\xi\\
    &\leq \smallint_{\mathbb{R}^n}\left(\smallint_{\mathbb{R}^n}(1+t^{N}(|x|^k+|\xi|^\ell)^{2\gamma N})^{-\tilde{p} }dx \right)^{\frac{\tilde{q}}{\tilde{p}} }d\xi\\
    &= \smallint_{\mathbb{R}^n}\left(\smallint_{\mathbb{R}^n}(1+(t^{\frac{1}{2\gamma}}(|x|^k+|\xi|^\ell))^{2\gamma N})^{-\tilde{p} }dx \right)^{\frac{\tilde{q}}{\tilde{p}} }d\xi\\
     &= \smallint_{\mathbb{R}^n}\left(\smallint_{\mathbb{R}^n}(1+(|t^{\frac{1}{2\gamma k}}x|^k+|t^{\frac{1}{2\gamma \ell}}\xi|^\ell)^{2\gamma N})^{-\tilde{p} }dx \right)^{\frac{\tilde{q}}{\tilde{p}} }d\xi.
\end{align*}The change of variables $\tilde{x}=t^{\frac{1}{2\gamma k}}x,$ and $\tilde{\xi}= t^{\frac{1}{2\gamma \ell}}\xi$ imply that 
\begin{align*}
  \tilde{C}(t)^{\tilde{q}}& \leq  \smallint_{\mathbb{R}^n}\left(\smallint_{\mathbb{R}^n}(1+(|\tilde{x}|^k+|\tilde{\xi}|^\ell)^{2\gamma N})^{-\tilde{p} }   t^{-\frac{n}{2\gamma k} } d\tilde{x} \right)^{\frac{\tilde{q}}{\tilde{p}} } t^{ -\frac{n}{2\gamma\ell}  }d\tilde {\xi}  \\
  &= \smallint_{\mathbb{R}^n}\left(\smallint_{\mathbb{R}^n}(1+(|\tilde{x}|^k+|\tilde{\xi}|^\ell)^{2\gamma N})^{-\tilde{p} }    d\tilde{x} \right)^{\frac{\tilde{q}}{\tilde{p}} } d\tilde {\xi} \, t^{-\frac{n\tilde{q}}{2\gamma k\tilde{p} } } t^{ -\frac{n}{2\gamma\ell}  }.
\end{align*}
Note that with $N$ large enough, the integral 
$$ I^{\tilde{q}}:= \smallint_{\mathbb{R}^n}\left(\smallint_{\mathbb{R}^n}(1+(|\tilde{x}|^k+|\tilde{\xi}|^\ell)^{2\gamma N})^{-\tilde{p} }    d\tilde{x} \right)^{\frac{\tilde{q}}{\tilde{p}} } d\tilde {\xi} <\infty $$
is finite. So, with $N$ large enough we have proved that
\begin{align*}
    \tilde{C}(t)\leq I \times t^{-\frac{n}{2\gamma k\tilde{p} } } t^{ -\frac{n}{2\gamma\ell \tilde{q}}}\leq C' t^{-\frac{n}{2\gamma}(\frac{1}{k\tilde{p}}+ \frac{1}{\ell\tilde{q}})}\,=C(t), \quad 0<t \leq 1.
\end{align*}
This establishes \eqref{aux456} and hence the estimate \eqref{mappinganharmonic} for the case $0<t\leq 1.$
\\

\noindent{\bf Case $t \geq 1$}. In view of the following representation 
\[e^{-t\An^{\gamma}}f=\sum\limits_{j=0}^{\infty}e^{-t\lambda_j^{\gamma}}P_jf,\]
it is sufficient to show the following estimates in order to get estimate \eqref{mappinganharmonic}: 
\begin{equation} \label{eq3.15pro}
    \Vert P_j f\Vert_{M^{p_2,q_2} } \leq C'\lambda_j^{m} \Vert f \Vert_{M^{p_1, q_1}}. 
\end{equation}
for all $j \in \mathbb{N},$ for some $m \geq 0$ and $C'>0,$ and
\begin{equation} \label{eq3.6pro}
    \sum_{j=0}^{\infty} e^{-t \lambda_j^\gamma} \lambda_j^m \leq C'' e^{-t\lambda_0^{\gamma}}
\end{equation}
for all $t\geq 1$ and some positive constant $C''.$\\

To prove \eqref{eq3.15pro}, we will use the embedding \eqref{embeddings} along with the definition of anharmonic Sobolev space $H_{k, \ell}^m.$ In fact, by taking large enough $m,$ we obtain
\begin{equation}
    \Vert P_j \Vert_{M^{p_2,q_2} \rightarrow M^{p_1, q_1}} \leq C'\Vert P_j \Vert_{H_{k, \ell}^{-m} \rightarrow H_{k, \ell}^m}\leq   C'\lambda_j^m.
\end{equation}
Indeed, it is easy to see this by a simple calculation as follows:
\begin{align*}
    \Vert P_j f\Vert_{H^m_{k, \ell}}^2 &= \Vert \An^{\frac{m}{2}} P_j f \Vert_{L^2(\mathbb{R}^n)}^2=  \sum_{k=0}^\infty \lambda_k^{m} \Vert P_k(P_j f) \Vert_{L^2(\mathbb{R}^n)}^2\\&  =   \lambda_j^{m} \Vert(P_j f) \Vert_{L^2(\mathbb{R}^n)}^2 =\lambda_j^{2m}  \lambda_j^{-m} \Vert   (P_j f) \Vert_{L^2(\mathbb{R}^n)}^2 \\& \leq \lambda_j^{2m} \sum_{k=0}^\infty \lambda_k^{-m} \Vert P_k f \Vert_{L^2(\mathbb{R}^n)}^2 = \lambda_j^{2m} \Vert  f\Vert_{H^{-m}_{k, \ell}}^2.
\end{align*}
Next, we move forward to establish \eqref{eq3.6pro}. Recall that from \eqref{specA} that 
\begin{equation}
    \lambda_j\sim C_{k,\ell}  j^{\frac{2k\ell}{n(k+\ell)}},\, \mbox{ as }\, j\geq j'.
\end{equation}
Therefore, to make use of these asymptotics we prove \eqref{eq3.6pro} in two parts. First, note that  the function $j \mapsto e^{-t \lambda_j^\gamma} \lambda_j^m$ is a decreasing for, say, $j \geq j'',$ by noting that eigenvalues $\{\lambda_j\}_j$ are arranged in increasing order. Thus, we separately estimate, by setting $j_0:=\max\{j', \,j''\},$ that 
\begin{equation} \label{eq319}
    \sum_{j=0}^{j_0} e^{-t \lambda_j^\gamma} \lambda_j^m \leq C_1 e^{-t \lambda_0^\gamma}
\end{equation} for some positive constant $C_1.$
Also, for the remaining part
\begin{align*}
    \sum_{j=j_0+1}^\infty e^{-t \lambda_j^\gamma} \lambda_j^m &\leq  C_2 \sum_{j=j_0+1}^\infty e^{-t C_{k, \ell} j^{\frac{2k\ell \gamma}{n(k+\ell)}}}  j^{\frac{2k\ell m}{n(k+\ell)}} \\&  \leq C_2 \int_{j_0}^\infty e^{-t C_{k, \ell} x^{\frac{2k\ell \gamma}{n(k+\ell)}}}  x^{\frac{2k\ell m}{n(k+\ell)}} dx\\& = C_2 e^{-\lambda_0^\gamma} \int_{j_0}^\infty e^{-t (C_{k, \ell} x^{\frac{2k\ell \gamma}{n(k+\ell)}}-\lambda_0^\gamma)}  x^{\frac{2k\ell m}{n(k+\ell)}} dx.
\end{align*}
Now, applying the change of variable $C_{k, \ell}x^{\frac{2k\ell \gamma}{n(k+\ell)}}-\lambda_0^\gamma=y$ and observing that $C_{k, \ell} j_0^{\frac{2k\ell \gamma}{n(k+\ell)}}-\lambda_0^\gamma>0$ as $C_{k, \ell} j_0^{\frac{2k\ell }{n(k+\ell)}} \sim \lambda_{j_0}^{\gamma}>\lambda_0^{\gamma},$  we obtain 
\begin{align} \label{eq320}
    \nonumber  \sum_{j=j_0+1}^\infty e^{-t \lambda_j^\gamma} \lambda_j^m &\leq  C_2 \frac{n(k+\ell)}{2k \ell \gamma}e^{-t\lambda_0^\gamma} \int_{C_{k, \ell} j_0^{\frac{2k\ell \gamma}{n(k+\ell)}}-\lambda_0^\gamma}^\infty e^{-t y} (y+\lambda_0^{\gamma})^{\frac{m}{\gamma}+\frac{n(k+\ell)}{2k \ell \gamma}-1} dy \\& \nonumber \leq  C_2 \frac{n(k+\ell)}{2k\ell \gamma}e^{-t\lambda_0^\gamma} \int_{0}^\infty e^{-t y} (y+\lambda_0^{\gamma})^{\frac{m}{\gamma}+\frac{n(k+\ell)}{2k \ell \gamma}-1} dy\\&\leq C_2' e^{-t \lambda_0^\gamma} \int_{0}^\infty e^{- y} (y+\lambda_0^{\gamma})^{\frac{m}{\gamma}+\frac{n(k+\ell)}{2k \ell \gamma}-1} dy \leq C_2'\, e^{-t \lambda_0^\gamma}, 
\end{align}
where we have used the fact that the integral in penultimate inequality is decreasing in $t,$ so its value for $t\geq$ can not bigger than that of $t=1.$ Therefore, by combining \eqref{eq319} and \eqref{eq320}, we obtain the desired estimate \eqref{eq3.6pro}.
\end{proof}%

\section{Applications to global well-posedness of  the nonlinear heat equation for the anharmonic oscillator $\An^{\gamma}$}\label{nonl1}
In this section, our main aim is to investigate the global well-posedness of the following nonlinear fractional heat equation associated with the anharmonic oscillator $\An:$
\begin{equation}
    \begin{cases}
        \partial_t u+ \An^\gamma u=\lambda  |u|^{2\beta} u, \\
        u(0, x)=u_0(x),
    \end{cases}
\end{equation}
for $(t,x) \in (0, \infty) \times \mathbb{R}^n,$ where $\beta \in \mathbb{N},$ $\lambda \in \mathbb{C}$ and $\gamma>0.$\\

We begin by recalling some well-known properties of modulation space \cite{feich:mod,than:hs}. The following result is concerned with the algebra property of modulation spaces. 
\begin{lemma} \label{lm}
    Let $m \in \mathbb{N}$ and $p_i,q_j \in [0, 1]$ for $1\leq i \leq m$ such that $\sum_{i=1}^m \frac{1}{p_i}=\frac{1}{p_0}$ and $\sum_{i=1}^m \frac{1}{q_i}=m-1+\frac{1}{q_0}.$ Then, for some $C>0,$ we have
    $$\Bigg\Vert \prod_{i=1}^m f_i \Bigg\Vert_{\M^{p_0,q_0}(\mathbb{R}^n)}\leq C \prod_{i=1}^m \Vert f_i \Vert_{\M^{p_i,q_i}(\mathbb{R}^n)}.$$
\end{lemma}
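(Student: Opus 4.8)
The plan is to reduce the $m$-fold product estimate to the well-known bilinear (convolution-type) multiplication estimate on modulation spaces and then iterate. First I would recall the standard bilinear result: for exponents satisfying $\frac{1}{p_0}=\frac{1}{p_1}+\frac{1}{p_2}$ and $\frac{1}{q_0}+1=\frac{1}{q_1}+\frac{1}{q_2}$ (all in $[0,1]$, or more generally in the admissible quasi-Banach range), one has $\Vert fg\Vert_{\M^{p_0,q_0}}\lesssim \Vert f\Vert_{\M^{p_1,q_1}}\Vert g\Vert_{\M^{p_2,q_2}}$. This is classical and is exactly the $m=2$ case of the statement; it can be quoted from the references already cited in the excerpt (e.g.\ \cite{feich:mod,than:hs}) or proved via the characterization of $\M^{p,q}$ through a uniform (Wiener-amalgam) decomposition on the frequency side, where the product becomes a convolution of the frequency-localized pieces and one applies Young's inequality in the $q$-variable and Hölder in the $x$-variable.

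Next I would set up the induction on $m$. The base case $m=1$ is trivial and $m=2$ is the bilinear estimate above. For the inductive step, suppose the claim holds for $m-1$ factors. Given $p_i,q_i\in[0,1]$ with $\sum_{i=1}^m \frac{1}{p_i}=\frac{1}{p_0}$ and $\sum_{i=1}^m \frac{1}{q_i}=m-1+\frac{1}{q_0}$, I would split off the last factor: write $\prod_{i=1}^m f_i = \big(\prod_{i=1}^{m-1} f_i\big)\cdot f_m$ and choose an intermediate pair $(\rho,\theta)$ by
\[
\frac{1}{\rho}:=\sum_{i=1}^{m-1}\frac{1}{p_i},\qquad \frac{1}{\theta}:=\sum_{i=1}^{m-1}\frac{1}{q_i}-(m-2).
\]
One checks that $\rho\in[0,1]$ automatically, and that $\theta\in[0,1]$ provided the hypotheses are arranged so that all partial sums stay admissible — this bookkeeping is the only delicate point and I address it below. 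By the inductive hypothesis applied to $f_1,\dots,f_{m-1}$ with target $(\rho,\theta)$ one gets $\Vert\prod_{i=1}^{m-1}f_i\Vert_{\M^{\rho,\theta}}\lesssim \prod_{i=1}^{m-1}\Vert f_i\Vert_{\M^{p_i,q_i}}$, and then the bilinear estimate applied to the pair $\big(\prod_{i=1}^{m-1}f_i, f_m\big)$ with exponents $(\rho,\theta)$, $(p_m,q_m)$ yields target $(p_0,q_0)$, since $\frac{1}{\rho}+\frac{1}{p_m}=\frac{1}{p_0}$ and $\frac{1}{\theta}+\frac{1}{q_m}-1=\big(\sum_{i=1}^{m-1}\frac{1}{q_i}-(m-2)\big)+\frac{1}{q_m}-1=\sum_{i=1}^m\frac{1}{q_i}-(m-1)=\frac{1}{q_0}$, as required. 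Multiplying the two constants gives the claim for $m$ factors.

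The main obstacle I anticipate is purely the index arithmetic: one must verify that the intermediate exponents $(\rho,\theta)$ produced at each stage of the induction remain in the admissible range $[0,1]$ (or, for the quasi-Banach versions, in whatever range the bilinear estimate is known to hold), so that both the inductive hypothesis and the bilinear estimate can legitimately be applied. Since the $q$-indices satisfy a shifted relation ($\sum \frac{1}{q_i}=m-1+\frac{1}{q_0}$ rather than a plain sum), each peeling-off step lowers the "$+1$ defect" by exactly one, and one should record this carefully; a clean way is to reorder the factors so that the running partial sums $\sum_{i=1}^{j}\frac{1}{q_i}-(j-1)$ are monotone, guaranteeing admissibility throughout. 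Once this is checked, the rest is a routine two-line induction, and since this lemma is standard I would in fact simply cite it from \cite{feich:mod} (or the survey \cite{RSW12}) rather than reproduce the argument in full.
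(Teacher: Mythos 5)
The paper does not actually prove this lemma: it states it as a known algebra property of modulation spaces and cites \cite{feich:mod,than:hs}, so there is no in-paper argument to compare against. Your sketch is a correct way to prove it, and your closing remark that one would in practice simply cite it is exactly what the authors do.

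Two comments on your proof plan. First, the bookkeeping worry you flag (that the intermediate $\theta$ might leave the admissible range, requiring a reordering of the factors) is in fact unfounded: with $\frac{1}{\theta}=\sum_{i=1}^{m-1}\frac{1}{q_i}-(m-2)$ one has $\frac{1}{\theta}=1+\frac{1}{q_0}-\frac{1}{q_m}$, and since each $\frac{1}{q_i}\in[0,1]$ this lies in $[0,1]$ automatically, for any ordering of the factors; more generally the running quantity $\frac{1}{\theta_j}=\sum_{i=1}^{j}\frac{1}{q_i}-(j-1)$ increases as $j$ decreases and stays between $\frac{1}{q_0}$ and $\frac{1}{q_1}$, so every intermediate target is admissible without any rearrangement. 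Second, the induction can be bypassed entirely: using the uniform (Wiener--amalgam) decomposition $\Box_k$, one applies $m$-fold H\"older in $x$ to the localized pieces and then $m$-fold Young's inequality on $\ell^q(\mathbb{Z}^n)$ in one stroke, and the Young condition $\sum_{i=1}^m \frac{1}{q_i}=(m-1)+\frac{1}{q_0}$ is exactly the stated hypothesis; this is both shorter and makes the shift by $m-1$ in the $q$-indices transparent. Finally, note the lemma as printed contains a typo ($p_i,q_j\in[0,1]$ should read $p_i,q_j\in[1,\infty]$, equivalently $1/p_i,1/q_j\in[0,1]$); your reading of the hypotheses is the intended one.
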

The next result easily follows from Lemma \ref{lm} and the embedding of modulation spaces (for proof, see e.g. \cite{than:hs}). 
\begin{lemma} \label{multiesti} Let $p,q,r \in [1, \infty]$ and $\frac{1}{r}+2\beta=\frac{2\beta+1}{q}$ for $\beta \in \mathbb{N}.$
   Then the following multi-linear estimate holds:
$$\Vert |f|^{2\beta} f \Vert_{\M^{p, r}(\mathbb{R}^n)} \leq C \Vert f \Vert_{\M^{p,q}(\mathbb{R}^n)}^{2\beta+1}.$$
\end{lemma}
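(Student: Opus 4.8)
The plan is to reduce the multi-linear estimate for $|f|^{2\beta}f$ to a direct application of the algebra-type product estimate of Lemma~\ref{lm} with $m = 2\beta+1$, treating the $2\beta$ factors $\bar f$ (or $f$) and the one factor $f$ uniformly. First I would observe that, as a pointwise function, $|f|^{2\beta}f = f^{\beta+1}\,\overline{f}^{\,\beta}$, which is a product of $m := 2\beta+1$ functions, each of which is either $f$ or $\overline f$; since the modulation norm is invariant under complex conjugation (the short-time Fourier transform of $\overline f$ has modulus equal to that of $V_g f$ after the obvious change of window), we have $\Vert f \Vert_{\M^{p,q}} = \Vert \overline f \Vert_{\M^{p,q}}$, so it suffices to bound $\Vert \prod_{i=1}^m g_i \Vert_{\M^{p,r}}$ with each $g_i \in \{f,\overline f\}$.

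Next I would set up the exponents so that Lemma~\ref{lm} applies. Take $p_1 = \dots = p_m := p_{*}$ with $\sum_{i=1}^m \frac{1}{p_i} = \frac{m}{p_*} = \frac{1}{p}$, i.e. $p_* = mp$; note $p_* \geq p \geq 1$ so $p_* \in [1,\infty]$ is admissible, and likewise we must check $p_* \in [0,1]$ as required by the hypothesis of Lemma~\ref{lm} — here the point is that Lemma~\ref{lm} as stated wants the \emph{reciprocals} in a suitable range, and one reads the hypothesis $p_i,q_j\in[0,1]$ as a constraint on $1/p_i, 1/q_j$; with $p_*=mp\ge 1$ this is satisfied. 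For the second index, choose $q_1 = \dots = q_m := q$ (the given index), and verify the compatibility condition $\sum_{i=1}^m \frac1{q_i} = \frac{m}{q} = m - 1 + \frac{1}{q_0}$ with $q_0 = r$: this is exactly the arithmetic identity $\frac{2\beta+1}{q} = 2\beta + \frac{1}{r}$, which is the hypothesis $\frac1r + 2\beta = \frac{2\beta+1}{q}$ of the lemma. Then Lemma~\ref{lm} yields
\begin{equation*}
\Bigl\Vert \prod_{i=1}^m g_i \Bigr\Vert_{\M^{p,r}} \leq C \prod_{i=1}^m \Vert g_i \Vert_{\M^{mp,\,q}} .
\end{equation*}

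Finally I would upgrade the right-hand side from $\M^{mp,q}$ back to $\M^{p,q}$ using the embedding of modulation spaces recalled in Section~\ref{preliminaries}: since $p \leq mp$ (and the second index and the weight are unchanged), one has $\M^{p,q}(\Rn) \hookrightarrow \M^{mp,q}(\Rn)$, hence $\Vert g_i \Vert_{\M^{mp,q}} \leq C\Vert g_i \Vert_{\M^{p,q}} = C\Vert f \Vert_{\M^{p,q}}$. Combining, $\Vert |f|^{2\beta} f \Vert_{\M^{p,r}} \leq C \Vert f \Vert_{\M^{p,q}}^{2\beta+1}$, which is the claim. The only genuinely delicate point — and the one I would be most careful about — is bookkeeping the admissibility of the intermediate exponents $(mp,q)$ against the precise hypotheses of Lemma~\ref{lm}; everything else (conjugation invariance of modulation norms, the embedding $\M^{p,q}\hookrightarrow\M^{mp,q}$, and the arithmetic of the two index identities) is routine. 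If one prefers to avoid the sharp algebra lemma, an alternative is an induction on $\beta$ using the bilinear product estimate $\Vert fg\Vert_{\M^{p,1}} \lesssim \Vert f\Vert_{\M^{p_1,\infty}}\Vert g\Vert_{\M^{p_2,\infty}}$ together with Hölder in the $p$-index, but the direct route above is cleaner.
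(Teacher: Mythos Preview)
Your proposal is correct and is exactly the route the paper indicates: the paper does not spell out a proof but simply says the lemma ``easily follows from Lemma~\ref{lm} and the embedding of modulation spaces,'' citing \cite{than:hs}. Your choice of exponents $p_i=(2\beta+1)p$, $q_i=q$, $p_0=p$, $q_0=r$ makes the two index identities in Lemma~\ref{lm} precisely match the hypothesis $\frac{1}{r}+2\beta=\frac{2\beta+1}{q}$, and the final step $\M^{p,q}\hookrightarrow\M^{(2\beta+1)p,q}$ together with conjugation invariance of the modulation norm is exactly the embedding the paper alludes to; your reading of the condition ``$p_i,q_j\in[0,1]$'' in Lemma~\ref{lm} as a typo for $1/p_i,1/q_j\in[0,1]$ is the only sensible interpretation.
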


Now, we state and prove the main result of this section concerning  the global wellposedness of nonlinear heat equations associated with the fractional anharmonic oscillator.

\begin{theorem} Consider the nonlinear heat equation \begin{equation}
    \begin{cases} \label{nonpro}
        \partial_t u+ \An^\gamma u =\lambda    |u|^{2\beta} u, \\
        u(0, x)=u_0(x),
    \end{cases}
\end{equation}
for $(t,x) \in (0, \infty) \times \mathbb{R}^n,$ where $\beta \in \mathbb{N},$ $\lambda \in \mathbb{C}$ and $\gamma>0.$ Let $p,q \in [1, \infty]$ be such that  $2\beta+1 \leq q'$ and $\frac{\beta n}{\gamma \ell}<q'.$ Then, for a small initial data $u_0,$ that is, there exists $\epsilon>0$ such that $\Vert u_0 \Vert_{\M^{p,q}} \leq \epsilon,$ the problem \eqref{nonpro} admits a global solution $u$ in $L^\infty([0, \infty), \M^{p,q}).$ In addition to this, if $p<\infty,$ $u \in C([0, \infty), \M^{p,q}).$ \\

Moreover, if $\epsilon$ is sufficiently small then we get the exponential decay in time, in particular, $u \in Y,$ where the space $Y$ is defined as  \begin{equation} \label{spaceY}
    Y:=\left\{ u \in L^\infty([0, \infty), \M^{p,q}) \Big| \Big\Vert e^{t \lambda_0^\gamma}  \Vert u(t, \cdot) \Vert_{\M^{p,q}(\mathbb{R}^n)} \Big\Vert_{L^\infty_t([0, \infty))} <+\infty\right\}
\end{equation}
equipped with norm 
\begin{equation}
    \Vert u \Vert_Y := \Big\Vert e^{t \lambda_0^\gamma}  \Vert u(t, \cdot) \Vert_{\M^{p,q}(\mathbb{R}^n)} \Big\Vert_{L^\infty_t([0, \infty))},
\end{equation} where $\lambda_0$ is the smallest eigenvalue of the anharmonic oscillator $\An.$
    
\end{theorem}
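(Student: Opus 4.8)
The plan is to run a standard contraction-mapping (Banach fixed point) argument on the Duhamel formulation of \eqref{nonpro}, using Theorem \ref{mainthmest} to control the linear semigroup and Lemma \ref{multiesti} to control the nonlinearity. Writing the mild solution as
\begin{equation}
    u(t)=e^{-t\An^{\gamma}}u_0+\lambda\int_0^t e^{-(t-s)\An^{\gamma}}\big(|u(s)|^{2\beta}u(s)\big)\,ds=:\Phi(u)(t),
\end{equation}
I would first work in the space $X:=L^{\infty}([0,\infty),\M^{p,q})$ with the closed ball $B_R=\{u\in X:\|u\|_X\le R\}$ for a radius $R$ to be chosen proportional to $\epsilon$. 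The key is to estimate $\|\Phi(u)(t)\|_{\M^{p,q}}$: for the linear term, Theorem \ref{mainthmest} with $p_1=p_2=p$, $q_1=q_2=q$ gives $\sigma=0$ and hence $\|e^{-t\An^{\gamma}}u_0\|_{\M^{p,q}}\le C\|u_0\|_{\M^{p,q}}$ uniformly in $t$ (using $C(t)\le C'$ for all $t>0$, since $e^{-t\lambda_0^\gamma}\le1$). For the Duhamel term I would apply Theorem \ref{mainthmest} with target exponents $(p,q)$ and source exponents $(p,r)$ where $r$ is chosen as in Lemma \ref{multiesti}, namely $\frac1r=\frac{2\beta+1}{q'}-2\beta+\text{(correction)}$; more precisely $r$ is the exponent for which $\frac1r+2\beta=\frac{2\beta+1}{q}$ is NOT what we want — instead we need $r\le q$ so that $\tilde q$ is finite and the time singularity is integrable. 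Here $\frac1{\tilde p}=\max\{0\}=0$ (so $k\tilde p=\infty$) and $\frac1{\tilde q}=\frac1q-\frac1r>0$, giving $\sigma=\frac{n}{2\gamma\ell}\big(\frac1q-\frac1r\big)$. Then
\begin{equation}
    \Big\|\int_0^t e^{-(t-s)\An^{\gamma}}\big(|u|^{2\beta}u\big)(s)\,ds\Big\|_{\M^{p,q}}\le C\int_0^t C(t-s)\,\||u(s)|^{2\beta}u(s)\|_{\M^{p,r}}\,ds\le C'\int_0^t C(t-s)\,\|u(s)\|_{\M^{p,q}}^{2\beta+1}\,ds,
\end{equation}
using Lemma \ref{multiesti}, which requires $p,q,r\in[1,\infty]$ and $r\le q$ — consistent with the hypothesis $2\beta+1\le q'$ after translating via the relation $\frac1r+2\beta=\frac{2\beta+1}{q}$, and the constraint $\frac{\beta n}{\gamma\ell}<q'$ is exactly what guarantees $\sigma<1$ so that $\int_0^t(t-s)^{-\sigma}\,ds$ converges.

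Next I would split $\int_0^t C(t-s)\,ds$ according to \eqref{heatest} into $\int_{(t-1)_+}^t (t-s)^{-\sigma}\,ds\le \frac{1}{1-\sigma}$ (finite precisely because $\sigma<1$) plus $\int_0^{(t-1)_+} e^{-(t-s)\lambda_0^\gamma}\,ds\le \lambda_0^{-\gamma}$, so that $\sup_{t>0}\int_0^t C(t-s)\,ds=:K<\infty$. This yields $\|\Phi(u)\|_X\le C\|u_0\|_{\M^{p,q}}+C'K\|u\|_X^{2\beta+1}\le C\epsilon+C'KR^{2\beta+1}$, and a similar computation, factoring $|u|^{2\beta}u-|v|^{2\beta}v$ and applying the (multilinear version of the) algebra estimate in Lemma \ref{lm}, gives the Lipschitz bound $\|\Phi(u)-\Phi(v)\|_X\le C'K(2\beta+1)R^{2\beta}\|u-v\|_X$. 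Choosing $R=2C\epsilon$ and then $\epsilon$ small enough that $C'K(2C\epsilon)^{2\beta}<\tfrac12$ and $C'K R^{2\beta+1}<\tfrac R2$ makes $\Phi$ a contraction of $B_R$ into itself; the Banach fixed point theorem produces the unique global solution $u\in L^{\infty}([0,\infty),\M^{p,q})$. Continuity in time when $p<\infty$ follows from strong continuity of the semigroup on $\M^{p,q}$ for $p<\infty$ (using density of $\mathcal S$ and the uniform bound from Theorem \ref{mainthmest}) together with continuity of the Duhamel integral, which is standard once the above estimates are in hand.

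Finally, for the exponential decay I would rerun the fixed point argument in the weighted space $Y$ of \eqref{spaceY} with norm $\|u\|_Y=\|e^{t\lambda_0^\gamma}\|u(t)\|_{\M^{p,q}}\|_{L^\infty_t}$. Multiplying the Duhamel identity by $e^{t\lambda_0^\gamma}$, the linear term is controlled by $e^{t\lambda_0^\gamma}\|e^{-t\An^{\gamma}}u_0\|_{\M^{p,q}}\le C'e^{t\lambda_0^\gamma}C(t)\|u_0\|\le C''\|u_0\|$ (for $t\ge1$ the factors $e^{t\lambda_0^\gamma}$ and $e^{-t\lambda_0^\gamma}$ cancel, and for $t\le1$, $e^{t\lambda_0^\gamma}t^{-\sigma}$ is not needed since here $\sigma=0$; it stays bounded). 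For the nonlinear term, inserting $e^{t\lambda_0^\gamma}=e^{(t-s)\lambda_0^\gamma}e^{s\lambda_0^\gamma}$ and distributing the $2\beta+1$ powers of $e^{s\lambda_0^\gamma}$ across the factors of $u(s)$, one is left with $e^{-2\beta s\lambda_0^\gamma}$ to spare, and the integral $\int_0^t C(t-s)e^{(t-s)\lambda_0^\gamma}e^{-2\beta s\lambda_0^\gamma}\,ds$ is again uniformly bounded in $t$ by the same splitting. Thus $\|\Phi(u)\|_Y\le C\|u_0\|+C'K'\|u\|_Y^{2\beta+1}$ and the contraction closes in $Y$ for $\epsilon$ sufficiently small, giving $u\in Y$.

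The main obstacle is the bookkeeping at the level of exponents: one must verify that the exponent $r$ demanded by the algebra/multilinear estimate (Lemma \ref{multiesti}) simultaneously satisfies $r\le q$ (so the max defining $\tilde q$ is the nonzero branch and Theorem \ref{mainthmest} applies with the claimed $\sigma$) and that the resulting $\sigma=\frac{n}{2\gamma\ell}(\frac1q-\frac1r)$ is strictly less than $1$ under the hypotheses $2\beta+1\le q'$ and $\frac{\beta n}{\gamma\ell}<q'$ — the second hypothesis is engineered precisely so that, after substituting the value of $r$, one gets $\sigma=\frac{\beta n}{\gamma\ell q'}<1$, making the Duhamel time integral near $s=t$ convergent. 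Once the integrability threshold $\sigma<1$ is secured, everything else is the routine small-data contraction scheme.
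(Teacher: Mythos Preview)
Your approach is essentially identical to the paper's: Duhamel formulation, Theorem \ref{mainthmest} to smooth from $\M^{p,r}$ into $\M^{p,q}$, Lemma \ref{multiesti} for the nonlinearity, then Banach contraction first in $L^\infty([0,\infty),\M^{p,q})$ and again in the weighted space $Y$, with strong continuity of the semigroup on $\M^{p,q}$ (via density of $\mathcal S$) handling the $p<\infty$ case. The one point to clean up in your write-up is the direction of the inequality between $r$ and $q$: the relation $\tfrac1r+2\beta=\tfrac{2\beta+1}{q}$ from Lemma \ref{multiesti} gives $\tfrac1r=\tfrac1q-\tfrac{2\beta}{q'}\le\tfrac1q$, hence $r\ge q$ (not $r\le q$), and this is exactly what is needed --- the semigroup acts from the \emph{rougher} source $\M^{p,r}$ (containing $|u|^{2\beta}u$) to the target $\M^{p,q}$, so in Theorem \ref{mainthmest} one takes $q_1=r$, $q_2=q$, $\tfrac1{\tilde q}=\tfrac1q-\tfrac1r=\tfrac{2\beta}{q'}>0$, yielding $\sigma=\tfrac{\beta n}{\gamma\ell q'}<1$ as you correctly conclude at the end.
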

\begin{proof}
    Using Duhamel's principle we can write the solution of $u$ of \eqref{nonpro} in the following integral form
    \begin{align} \label{eq4.5}
        u(t)= e^{-t\An^\gamma}u_0+\lambda \int_{0}^t e^{-(t-s)\An^\gamma} (|u(s)|^{2\beta} u(s))\,ds.
    \end{align}
    Now, with $p,q$ as in the statement of the theorem, from Theorem \ref{mainthmest} we have 
    \begin{align} \label{eq147}
        \Vert e^{-t\An^\gamma} f \Vert_{\M^{p,q}(\mathbb{R}^n)} \leq C' \Vert  f \Vert_{\M^{p,q}(\mathbb{R}^n)},
    \end{align}
    for all $t\geq 0$ and  some positive constant $C'.$  Also, for any $r\geq q$, by setting $p_1=p_2=p$ and $q_1=q \leq r=q_2$ and so $\sigma=\frac{n}{2\gamma \ell}\left(\frac{1}{q}-\frac{1}{r} \right)$ in Theorem \ref{mainthmest}, we have 
     \begin{align} \label{eq4.7}
        \Vert e^{-t\An^\gamma} f \Vert_{\M^{p,q}(\mathbb{R}^n)} \leq C' \begin{cases}
            t^{-\sigma} \Vert  f \Vert_{M^{p,r}(\mathbb{R}^n)} & \quad0<t \leq 1,\\
            e^{-t\lambda_0^\gamma} \Vert  f \Vert_{M^{p,r}(\mathbb{R}^n)} &\quad t \geq 1.
        \end{cases}  
    \end{align} for some $C'>0.$ 
    By the assumption $2\beta+1 \leq q',$ so we can pick $1\leq r\leq \infty$ such that  $\frac{2\beta+1}{q}=\frac{1}{r}+2\beta$ since $2\beta \leq \frac{2\beta+1}{q} \leq 2\beta+1$ and $\beta \in \mathbb{N}.$ Also, the assumption $\frac{\beta n}{\gamma \ell}<q'$ yields that $\sigma=\frac{n}{2\gamma \ell}\left(\frac{1}{q}-\frac{1}{r} \right)<1.$
    \\
    
    Let us now estimate the nonhomogeneous part of the integral \eqref{eq4.5}. An application of Minkowski's  inequality with  \eqref{eq4.7} gives
    \begin{align} \label{eq48}
   \nonumber     \Bigg\Vert \int_{0}^t e^{-(t-s)\An^\gamma}& (|u(s)|^{2\beta} u(s))\,ds \Bigg\Vert_{\M^{p,q}(\mathbb{R}^n)} \leq  \int_{0}^t \Vert e^{-(t-s)\An^\gamma} (|u(s)|^{2\beta} u(s)) \Vert_{\M^{p,q}(\mathbb{R}^n)}\,ds \\& \leq C'\begin{cases} \int_{0}^t
            (t-s)^{-\sigma} \Vert  (|u(s)|^{2\beta} u(s)) \Vert_{M^{p,r}(\mathbb{R}^n)}\,ds & \quad0<t \leq 1,\\
          \int_{0}^t  e^{-(t-s)\lambda_0^\gamma} \Vert (|u(s)|^{2\beta} u(s))  \Vert_{M^{p,r}(\mathbb{R}^n)}\,ds &\quad t \geq 1.\end{cases}  
    \end{align}
    Now, we use Lemma \ref{multiesti} to obtain
    \begin{align} \label{eq248}
    \nonumber    \Bigg\Vert \int_{0}^t e^{-(t-s)\An^\gamma}& (|u(s)|^{2\beta} u(s))\,ds \Bigg\Vert_{\M^{p,q}(\mathbb{R}^n)}  \nonumber\\& \leq C''\begin{cases} \int_{0}^t
            (t-s)^{-\sigma} \Vert   u(s) \Vert_{\M^{p,q}(\mathbb{R}^n)}^{2\beta+1}\,ds & \quad0<t \leq 1,\\
          \int_{0}^t  e^{-(t-s)\lambda_0^\gamma} \Vert  u(s)  \Vert_{\M^{p,q}(\mathbb{R}^n)}^{2 \beta+1}\,ds &\quad t \geq 1.\end{cases}  \nonumber \\&\leq C'' \Vert  u(s)  \Vert_{L^\infty([0, t], \,\M^{p,q}(\mathbb{R}^n))}^{2 \beta+1} \begin{cases} \int_{0}^t
            \tau^{-\sigma} \,ds & \quad0<t \leq 1,\\
          \int_{0}^t  e^{-\tau\lambda_0^\gamma} \,ds &\quad t \geq 1.\end{cases}  \nonumber\\& \leq C_1 \Vert  u(s)  \Vert_{L^\infty([0, \infty), \,\M^{p,q}(\mathbb{R}^n))}^{2 \beta+1},
    \end{align}
    because the integral given in the penultimate inequality is finite with the given condition $\sigma<1.$\\

As we are going to use the Banach contraction argument for the proof, let us set the operator of our interest
\begin{equation}
    \mathfrak{T}u:=e^{-t\An^\gamma}u_0+\lambda \int_{0}^t e^{-(t-s)\An^\gamma} (|u(s)|^{2\beta} u(s))\,ds
\end{equation}
for $u$ in the closed ball $$B_R:=\{u \in L^\infty([0, \infty), \,\M^{p,q}(\mathbb{R}^n)): \Vert u \Vert_{L^\infty([0, \infty), \,\M^{p,q}(\mathbb{R}^n))} \leq R\}$$
in $L^\infty([0, \infty), \,\M^{p,q}(\mathbb{R}^n))$ of radius $R$ and centered at origin. \\

{\bf Claim:} We claim that the operator $\mathfrak{T}$ is a contraction map from $B_R$ into $B_R$ for some suitable  $R>0.$

We first notice that it follows from \eqref{eq147} and \eqref{eq248} that for some $C_2>0,$ we have the following estimate for $\mathfrak{T}$ in $B_R,$
\begin{align}
     \Vert \mathfrak{T}u\Vert_{L^\infty([0, \infty), \,\M^{p,q}(\mathbb{R}^n))} &\leq C_2 \left( \Vert  u_0 \Vert_{\M^{p,q}(\mathbb{R}^n)}+ \Vert  u(s)  \Vert_{L^\infty([0, \infty), \,\M^{p,q}(\mathbb{R}^n))}^{2 \beta+1}\right),
\end{align}
which by assuming $\Vert  u_0 \Vert_{\M^{p,q}(\mathbb{R}^n)} \leq \frac{R}{2C_2},$ for now,  yields that 
\begin{align}
     \Vert \mathfrak{T}u\Vert_{L^\infty([0, \infty), \,\M^{p,q}(\mathbb{R}^n))} &\leq \frac{R}{2}+C_2 R^{2\beta+1},
\end{align}
as $u \in B_R.$
Now, to make sure that $\mathfrak{T}u \in B_R,$ we must choose $R$ such $C_2R^{2\beta+1}<\frac{R}{2},$ that is, $R< \left(\frac{1}{2C_2} \right)^{\frac{1}{2\beta}}.$ Therefore, choosing $\epsilon=\frac{R}{2C_2}$ with $R< \left(\frac{1}{2C_2} \right)^{\frac{1}{2\beta}}$ we have 
\begin{align}
     \Vert \mathfrak{T}u\Vert_{L^\infty([0, \infty), \,\M^{p,q}(\mathbb{R}^n))} &\leq \frac{R}{2}+\frac{R}{2}=R,
\end{align}
implying that $\mathfrak{T}u \in B_R.$

Next, we show that the map $\mathfrak{T}$ is a contraction. For this, by repeating  similar arguments as above for $u, v \in B_R$ and using the following inequality
$$|u|^{2\beta}u-|v|^{2\beta} v\leq C_3 (|u|^{2\beta-1}+|v|^{2\beta-1})|u-v|,$$
we have, for $C_4>0,$ that 
\begin{align}
 \nonumber   \Vert \mathfrak{T}u-\mathfrak{T}v\Vert_{L^\infty([0, \infty), \,\M^{p,q}(\mathbb{R}^n))} \leq &C_4 \left(\Vert u\Vert_{\M^{p,q}(\mathbb{R}^n)}^{2\beta-1}+ \Vert v\Vert_{\M^{p,q}(\mathbb{R}^n)}^{2\beta-1}\right)\Vert u-v\Vert_{\M^{p,q}(\mathbb{R}^n)}\\&\leq C_4'R^{2\beta-1} \Vert u-v\Vert_{\M^{p,q}(\mathbb{R}^n)}.
\end{align}
Hence, by taking $R>0$ sufficiently small and therefore $\epsilon$, for example $R<\left(\frac{1}{2C_4'}\right)^{\frac{1}{2\beta-1}},$ we conclude that $\mathfrak{T}$ is a contraction on $B_R,$ establishing our claim. 

Therefore, using Banach fixed point theorem, we conclude that $\mathfrak{T}$ has a unique fix point in $B_R$ which is the solution of nonlinear problem \eqref{nonpro}.
\\

  Additionally, let us assume that $p<\infty$. Our objective is to demonstrate that, under this assumption, the unique solution $u\in L^\infty([0, \infty), \M^{p,q}(\mathbb{R}^n))$ obtained from \eqref{nonpro} as described earlier is continuous with respect to $t$. This implies that $u \in C([0, \infty), \M^{p,q}(\mathbb{R}^n))$. The condition $2\beta+1\leq q'$ also implies that $q<\infty$. Let us assume for a moment that the semigroup ${e^{-t\An^\gamma}}$ is strongly continuous on $\M^{p,q}(\mathbb{R}^n),$ then we repeat the similar calculation and the Banach contraction argument as above by replacing $L^{\infty}([0, \infty), \M^{p,q}(\mathbb{R}^n))$ by $C([0, \infty), \M^{p,q}(\mathbb{R}^n))$ to get the desired conclusion. It is evident from \eqref{eq147} that  it is enough to show that, for every $f$ in the dense subset of $\M^{p,q}(\mathbb{R}^n),$ the map $t\mapsto e^{-t\An^\gamma} f$ is continuous with values in $\M^{p,q}(\mathbb{R}^n)$ in order to ensure that $e^{-t\An^\gamma}$ is a strongly continuous semigroup on $\M^{p,q}(\mathbb{R}^n).$ 

To establish the continuity of the map $t\mapsto e^{-t\An^\gamma} f$ with values in $\M^{p,q}(\mathbb{R}^n)$ for every $f$ within the dense subset of $\M^{p,q}(\mathbb{R}^n)$, we leverage the inclusion relation \eqref{anhmet012hh} and apply a certain abstract argument from \cite[Page 194]{NicolaRodino}. This involves utilizing the fact that the semigroup $e^{-t\An^\gamma}$ exhibits strong continuity on $L^2(\mathbb{R}^n)$ and $\An^\beta$ commutes with $e^{-t\An^\gamma}$ for every $\beta \in \mathbb{N}$. Consequently, the mapping $t \mapsto e^{-t\An^\gamma}\An^\beta f=\An^\beta e^{-t\An^\gamma}f$ is continuous with values in $L^2(\mathbb{R}^n)$ for every $\beta \in \mathbb{N}$.  
By referring to \cite[Page 194]{NicolaRodino}, we infer that the seminorms $p_{\beta}(f):=\Vert \An^\beta f\Vert_{L^2(\mathbb{R}^n)}$ for $\beta \in \mathbb{N}$ define an equivalent family of seminorms on $\mathcal{S}(\mathbb{R}^n)$. Consequently, the mapping $t \mapsto e^{-t\An^\gamma} f$ exhibits continuity with values in $\mathcal{S}(\mathbb{R}^n)$, and by extension, when treated as a $\M^{p,q}(\mathbb{R}^n)$-valued function. This conclude that for $p<\infty,$ the unique solution $u$ of \eqref{nonpro} is in  $ C([0, \infty), \M^{p,q}(\mathbb{R}^n).$\\

    Now we proceed to deduce that by choosing $\epsilon>0$ sufficiently small we can get the desired rate of decay of $u.$ Again, from Theorem \ref{mainthmest} it follows that 
$$\Vert e^{-t\An^\gamma} f \Vert_{\M^{p,q}(\mathbb{R}^n)} \leq C_0 e^{-t \lambda_0^\gamma} \Vert f\Vert_{\M^{p,q}(\mathbb{R}^n)},$$
for some $C_0>0,$ which implies that 
\begin{equation} \label{M1}
    \Vert e^{-t\An^\gamma} f \Vert_{Y} \leq C_0 \Vert f\Vert_{\M^{p,q}(\mathbb{R}^n)},
\end{equation}
where the space $Y$ is defined by \eqref{spaceY}. 

Similar to estimate \eqref{eq48},  we have

\begin{equation} \label{M2}
    e^{t\lambda_0^\gamma}\Bigg\Vert \int_0^t e^{-(t-s)\An^\gamma} (|u(s)|^{2\beta}u(s))\, ds \Bigg\Vert_{\M^{p,q}(\mathbb{R}^n)} \leq C_0'e^{t\lambda_0^\gamma} \int_0^t C(t-s) \Vert u(s) \Vert_{\M^{p,q}(\mathbb{R}^n)}^{2\beta+1} \,ds,
\end{equation} where $C(t)$ for $t>0$ is given by \begin{equation} \label{heatest1}
    C(t)= C' \begin{cases} t^{-\sigma}\quad & 0<t \leq 1, \\ e^{-t \lambda_0^\gamma} \quad & t \geq 1,
    \end{cases} 
\end{equation}
for some positive constant $C'.$ To estimate the integral on the right-hand side, we set 
$T_1:=\{s \in [0, t]: t-s<1\}$ and $T_2:=\{s\in [0, t]: t-s \geq 1\}.$
Therefore, we obtain
\begin{align} \label{M3}
 \nonumber   &e^{t\lambda_0^\gamma} \int_0^t C(t-s) \Vert u(s) \Vert_{\M^{p,q}(\mathbb{R}^n)}^{2\beta+1} \,ds \\&\quad= e^{t\lambda_0^\gamma} \int_{T_1} C(t-s) \Vert u(s) \Vert_{\M^{p,q}(\mathbb{R}^n)}^{2\beta+1} \,ds +e^{t\lambda_0^\gamma} \int_{T_2} C(t-s) \Vert u(s) \Vert_{\M^{p,q}(\mathbb{R}^n)}^{2\beta+1} \,ds 
   \nonumber \\&\leq C_1 e^{t\lambda_0^\gamma} \int_{T_1} (t-s)^{-\sigma} (e^{-s\lambda_0^\gamma} e^{-s\lambda_0^\gamma}\Vert u(s) \Vert_{\M^{p,q}(\mathbb{R}^n)})^{2\beta+1} \,ds \nonumber\\& \quad\quad\quad\quad\quad\quad\quad\quad\quad+ C_1e^{t\lambda_0^\gamma} \int_{T_2} e^{-(t-s)\lambda_0^\gamma} \Vert u(s) \Vert_{\M^{p,q}(\mathbb{R}^n)}^{2\beta+1} \,ds \nonumber\\&\leq C_1 e^{t\lambda_0^\gamma} \int_{T_1} (t-s)^{-\sigma} e^{-s(2\beta+1)\lambda_0^\gamma} ( e^{s\lambda_0^\gamma}\Vert u(s) \Vert_{\M^{p,q}(\mathbb{R}^n)})^{2\beta+1} \,ds \nonumber\\& \quad\quad\quad\quad\quad\quad\quad\quad\quad+ C_1 \int_{T_2} e^{2\beta s\lambda_0^\gamma}(e^{s\lambda_0^\gamma} \Vert u(s) \Vert_{\M^{p,q}(\mathbb{R}^n)})^{2\beta+1} \,ds \nonumber\\& \leq C_1 e^{t\lambda_0^\gamma} \Vert u(s) \Vert_{Y}^{2\beta+1} \int_{T_1} (t-s)^{-\sigma} e^{-s(2\beta+1)\lambda_0^\gamma} \,ds+C_1  \Vert u(s) \Vert_{Y}^{2\beta+1} \int_{T_2} e^{-2\beta s\lambda_0^\beta}\,ds \nonumber\\& \leq C_1 e^{t\lambda_0^\gamma} \Vert u(s) \Vert_{Y}^{2\beta+1} \int_{0}^1 \tau^{-\sigma}  \,d\tau +C_1  \Vert u(s) \Vert_{Y}^{2\beta+1} \int_{0}^\infty e^{-2\beta s\lambda_0^\beta}\,ds \nonumber\\& \leq C_1'  \Vert u(s) \Vert_{Y}^{2\beta+1}, 
\end{align}
where we have used that $\beta >0$ and $\sigma<1$ by the hypothesis in the statement of the theorem.
Therefore, putting together \eqref{M1}, \eqref{M2} and \eqref{M3}, we have
\begin{align*}
   \Vert \mathfrak{T}u \Vert_{Y} &\leq  \Vert e^{-t\An^\gamma}u_0 \Vert_{Y} +\Big\Vert \lambda \int_{0}^t e^{-(t-s)\An^\gamma} (|u(s)|^{2\beta} u(s))\,ds  \Big\Vert_Y \\& \leq C_2 \left( \Vert u_0 \Vert_{\M^{p,q}(\mathbb{R}^n)}+ \Vert u(s) \Vert_{Y}^{2\beta+1} \right).
\end{align*}
Finally, we can argue as before using Banach's contraction argument to get the desired result. \end{proof}

\bibliographystyle{amsplain}

\end{document}